\title{Colouring of plane graphs with unique maximal colours on faces}
\author{Alex Wendland\thanks{Warwick Institute of Mathematics, University of Warwick, Coventry, United Kingdom, CV4 7AL. E-mail: \texttt{a.p.wendland@warwick.ac.uk}. This research was done during the author's visit to Charles University in Prague and University of West Bohemia in Pilsen which was supported by Undergraduate Research Support Scheme of the University of Warwick and the grant GA14-19503S (Graph coloring and structure) of the Czech Science Foundation.}}
\date{}
\newcommand{\tab}{\hspace*{0.2 in}}
\newtheorem{mydef}{Definition}[section]
\newtheorem{thm}[mydef]{Theorem}
\newtheorem{lem}[mydef]{Lemma}
\newtheorem{prop}[mydef]{Proposition}
\newtheorem{conj}[mydef]{Conjecture}
\theoremstyle{definition}
\begin{document}

\maketitle

\begin{abstract}
The Four Colour Theorem asserts that the vertices of every plane graph can be properly coloured with four colours. Fabrici and G\"oring conjectured the following stronger statement to also hold: the vertices of every plane graph can be properly coloured with the numbers $1,\ldots,4$ in such a way that every face contains a unique vertex coloured with the maximal colour appearing on that face. They proved that every plane graph has such a colouring with the numbers $1,\ldots,6$. We prove that every plane graph has such a colouring with the numbers $1,\ldots,5$ and we also prove the list variant of the statement for lists of sizes seven.
\end{abstract}

\section{Introduction}

\tab A lot of research in graph theory was sparked by the problem of four colours posed by Francis Guthrie in 1852. It took more than 125 years until the problem was resolved by Appel and Haken \cite{KAHW} and the conjectured statement became known as the Four Colour Theorem. A refined proof of the Four Colour Theorem was given by Robertson, Seymour, Sanders and Thomas \cite{RSST}. Our work is motivated by a conjecture of Fabrici and G\"oring \cite{IFFG} which, if true, would strengthen the Four Colour Theorem.
\begin{conj}
\label{conj}
(Fabrici and G\"oring \cite[Conjecture 9]{IFFG}) Every plane graph has a proper colouring using the number $1$, $2$, $3$ and $4$ such that every face contains a unique vertex coloured with the maximal colour appearing on that face.
\end{conj}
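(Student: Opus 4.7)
The plan is to attempt to deduce Conjecture~\ref{conj} from the Four Colour Theorem by first reducing to a triangulated setting and then correcting the remaining long faces via Kempe-type swaps.

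First I would observe that if $G$ is a plane triangulation then the conjecture is automatic: the three vertices bounding any face are pairwise adjacent, so any proper $4$-colouring assigns three distinct colours on each face and the maximum colour is trivially unique. The entire difficulty therefore lies with faces of length at least four, and we may assume by standard preprocessing that $G$ is $2$-connected with every face of length $\geq 4$.

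Next, for such $G$ I would triangulate it to obtain a supergraph $G^+$ by inserting chords inside each non-triangular face, and apply the Four Colour Theorem to get a proper $4$-colouring $c$ of $G^+$. Its restriction to $G$ is still proper. Call a face $f$ of $G$ \emph{bad} if the maximum colour appearing on $f$ is attained by at least two vertices of $f$. The aim is then to modify $c$ inside $G$ by Kempe swaps until no face is bad. For a bad face $f$ with maximum colour $M<4$, one tries to swap an $(M,M')$-Kempe chain, with $M'>M$, that meets exactly one offending vertex, so as to make some colour unique on $f$ without destroying properness.

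To control the interactions between faces I would frame the argument by discharging. Give each vertex $v$ the charge $\deg(v)-4$ and each face $f$ the charge $\deg(f)-4$, so the total is $-8$ by Euler's formula. Design rules pushing charge from bad faces toward local configurations — short separating cycles, low-degree vertices on $f$, pairs of adjacent bad faces sharing many colours — that can each be eliminated by a local recolouring of a minimum counterexample. One then hopes to derive non-negative total charge from the absence of these reducible configurations, producing a contradiction.

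The main obstacle is that the conjecture is strictly stronger than the Four Colour Theorem, so any complete proof must at minimum reproduce the combinatorial depth of the $4$CT argument while simultaneously tracking colour multiplicities on every face. Local Kempe swaps correcting one bad face can create new bad neighbours, so no greedy procedure suffices; one appears to need a globally consistent scheme, perhaps encoded as a Hall-type matching or a flow between bad faces and the Kempe chains that can fix them. For this reason I expect real progress first under extra hypotheses — large girth, faces of bounded length, or subclasses admitting Heawood-style nowhere-zero-flow formulations — before the full conjecture, which is consistent with the present paper's strategy of first settling the $5$-colour relaxation.
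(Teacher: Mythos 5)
The statement you are asked about is Conjecture~\ref{conj}, which is \emph{open}: the paper does not prove it, and explicitly presents it as the motivating conjecture of Fabrici and G\"oring, settling only the weaker $5$-colour relaxation (Theorem~\ref{my1}, via the strengthened structural Lemma~\ref{my2} combined with Gr\"otzsch's theorem). So there is no ``paper's own proof'' to compare against, and your text, candidly, is not a proof either but a research programme. Your correct observations --- that triangular faces are never problematic under a proper colouring, so the difficulty is concentrated on faces of length at least four, and that the conjecture strictly strengthens the Four Colour Theorem --- match what the paper says in its introduction.

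The genuine gaps are the two steps you leave entirely unspecified, and each is where the whole difficulty lives. First, the claim that a bad face can be repaired by ``an $(M,M')$-Kempe chain that meets exactly one offending vertex'' is not justified: such a chain need not exist (both offending vertices may lie on the same $(M,M')$-chain, in which case the swap changes nothing on $f$), and even when a usable chain exists the swap can create new bad faces elsewhere, as you acknowledge. This is precisely the failure mode of Kempe's original argument for the Four Colour Theorem, and nothing in your sketch controls it. Second, the discharging framework is stated only as an intention --- no discharging rules are given, no reducible configurations are exhibited, and no reducibility proofs are supplied --- so there is no contradiction to derive. Your minor claim that one ``may assume every face has length $\geq 4$ by standard preprocessing'' is also not a real reduction (triangular and longer faces coexist and triangles cannot simply be removed), though it is harmless since triangular faces are automatically good. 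If you want to make progress in the direction the paper actually takes, the productive move is the one in Section~2: prove a non-proper $3$-colouring lemma (red/blue/black with one red or one blue per face and no all-black triangle), then colour the black subgraph with Gr\"otzsch; this yields $\chi_C(G)\le 5$ but the jump to $4$ remains open.
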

We will refer to a colouring of this kind as to a {\em capital} colouring, i.e., a capital colouring is a proper vertex colouring using integers such that every face contains a unique vertex coloured with the maximal colour appearing on that face. The name comes from the fact that every face (region) has a unique vertex (capital) with the maximal colour. The capital chromatic number $\chi_C(G)$ of a graph $G$ is the smallest $k$ such that there exists a capital colouring using $1, \ldots, k$. Here we would like to note that we state our results using plane graphs, graphs with an embedding into the plane, instead of planar graphs, graphs such that there exists an embedding into the plane so does not have a fixed embedding. A face of a plane graph is the maximal connected part of the plane with the drawing of the graph removed and we often associate a face with the vertices and faces that bound it.\\

Note that Conjecture 1.1 holds for triangulations since any proper colouring of a triangulation has the required properties. Fabrici and G\"oring~\cite{IFFG} proved that every plane graph has a capital colouring using colours $1,\ldots,6$. We prove a stronger result that every plane graph has a capital colouring using colours $1,\ldots,5$.

\begin{thm}
If $G$ is a plane graph then $\chi_{C}(G) \leq 5$. \label{my1}
\end{thm}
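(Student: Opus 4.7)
The plan is to prove Theorem~\ref{my1} by exploiting the Four Colour Theorem directly. The starting observation is that a proper $4$-colouring of a triangulation is automatically a capital colouring, since each triangular face uses three distinct colours. By the Four Colour Theorem the claim is therefore trivial for triangulations, and the task is to handle faces of length at least $4$ using the additional fifth colour as a \emph{joker} to designate a unique capital on each such face.

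Specifically, I would aim to find a pair $(S,c)$, where $S\subseteq V(G)$ and $c$ is a proper $4$-colouring of $G\setminus S$, with the property that every face of $G$ contains at most one vertex of $S$, and every face of $G$ of length at least $4$ contains exactly one. Promoting the vertices of $S$ to colour $5$ then yields a capital $5$-colouring of $G$: each long face has its unique $S$-vertex as the maximum, and each triangular face has either three distinct colours from $\{1,\ldots,4\}$ (unique maximum) or one $S$-vertex together with two vertices of colour at most $4$ (again unique maximum, equal to $5$). A natural device for constructing such a pair is to fan-triangulate each long face $f$ from a chosen apex $v_f\in V(f)$ and to apply the Four Colour Theorem to the resulting triangulation; in this triangulation $v_f$ is adjacent to every other vertex of $V(f)$, so after being promoted to colour $5$ it remains properly coloured in $G$.

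The main obstacle, and the heart of the proof, is the consistent simultaneous choice of the apexes. No two apexes may share a face of $G$, because otherwise either properness fails after promotion to $5$ (if they are adjacent in the fan-triangulation), or some face ends up with two vertices of colour $5$. Establishing that such a face-independent system of distinct representatives for the long faces always exists is a non-trivial combinatorial task that I expect to dominate the proof. My plan is to carry it out by induction on $|V(G)|$, analysing a minimum counter-example: reduce to the $2$-connected case via a block decomposition at cut vertices (re-labelling colours at the shared vertex), and then apply a standard discharging argument with charges $\deg(v)-4$ on vertices and $\ell(f)-4$ on faces, summing to $-8$ by Euler's formula, to produce an unavoidable list of reducible local configurations. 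For each such configuration, remove or contract part of $G$, apply the induction hypothesis to obtain a capital $5$-colouring of the smaller plane graph, and extend it using the apex-promotion device. The subtlety in reducing from six colours to five lies precisely in arranging the apex selection so that a \emph{single} additional colour suffices even when many long faces are densely interlocked, and this is where the bulk of the case analysis will live.
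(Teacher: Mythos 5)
Your reduction is clean as far as it goes: if there were a set $S\subseteq V(G)$ meeting every face of $G$ in at most one vertex and every face of length at least four in exactly one vertex, then taking any proper $4$-colouring of a triangulation of $G$ and promoting the vertices of $S$ to colour $5$ would indeed give a capital $5$-colouring (the fan-triangulation is not even needed for properness, since such an $S$ is automatically independent). The fatal problem is that the set $S$ you plan to construct by discharging does not always exist, so no case analysis will produce it. Consider the gear graph: a hub $v$ joined to $u_1,\dots,u_n$ in cyclic order, with an extra vertex $w_i$ adjacent to $u_i$ and $u_{i+1}$ for each $i$ (indices modulo $n$), embedded so that the inner faces are the quadrilaterals $vu_iw_iu_{i+1}$ and the outer face is the $2n$-cycle $u_1w_1u_2w_2\cdots u_nw_n$. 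Every face has length at least four, so every face needs exactly one vertex of $S$. If $v\in S$ then no other vertex may be in $S$, since every other vertex shares an inner quadrilateral with $v$; but then the outer face contains no vertex of $S$. If $v\notin S$ then the outer face forces $S$ to consist of a single rim vertex, which lies on at most two of the $n$ inner quadrilaterals, so for $n\ge 3$ some inner face is missed. Hence no valid $S$ exists and your central lemma is false.

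This obstruction is exactly why the paper uses a two-tier selection rather than a single independent system of face representatives. Following Fabrici and G\"oring, it produces (Lemma \ref{my2}) a red/blue/black colouring in which every inner face has at most one red vertex, every inner face with no red vertex has exactly one blue vertex, and no edge joins two blue vertices; the genuinely new ingredient is the added requirement that no triangle is entirely black, which makes the black subgraph triangle-free. Gr\"otzsch's theorem then colours the black vertices properly with $\{1,2,3\}$, and blue and red become $4$ and $5$. So the colour budget is spent quite differently from your plan: two colours, not one, are reserved for face representatives, and the Four Colour Theorem is replaced by Gr\"otzsch's theorem on what remains. Any attempt to salvage your approach by weakening the demand on $S$ to a red/blue-style hierarchy essentially leads back to the paper's argument.
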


In addition, we consider the list version of capital colourings and we show that if each vertex of a plane graph is assigned a list of seven integers, then there exists a capital colouring assigning each vertex a colour from its list. Throughout this paper, a plane graph is a loopless graph embedded in the plane that may contain parallel edges which may (but need not) form 2-faces.

\section{Unique maximum 5-colouring}

We start by recalling an auxiliary Lemma \ref{FG2} from \cite{IFFG}. 

\begin{lem}
(Fabrici and G\"{o}ring \cite[Lemma 6]{IFFG}) Let $G$ be a plane graph with no parallel edges, let $xy \in E(G)$ be an edge of $G$ incident with the outer face, and let $c \in \{\mbox{black}, \mbox{blue}\}$. There is a non-proper 3-vertex-colouring of $G$ with colours red, blue and black such that
\begin{enumerate}
\item
vertex $x$ has colour $c$,
\item
vertex $y$ is black,
\item
each edge is incident with at most one blue vertex,
\item
no vertex incident with the outer face is red,
\item
each inner face is incident with at most one red vertex, and
\item
each inner face that is not incident with a red vertex is incident with exactly one blue vertex.
\end{enumerate}
\label{FG2} 
\end{lem}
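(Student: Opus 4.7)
My plan is to prove Lemma \ref{FG2} by strong induction on $|V(G)|$. The base case is $G = xy$ with no inner faces: assign $x$ colour $c$ and $y$ colour black, which trivially satisfies (1)--(6). For the inductive step I would identify a reducible substructure, apply the hypothesis to one or two smaller plane graphs, and glue the colourings together, exploiting the fact that the inductive hypothesis leaves the colour $c$ of $x$ free to be chosen.

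I would handle the cases in the following order. First, if $G$ has a cut vertex or a bridge (so the outer face boundary is not a cycle), decompose $G$ along the cut into smaller plane graphs, choose a suitable outer edge in each, induct, and reconcile the two colourings at the cut vertex by invoking the inductive freedom in the colour of $x$. Second, assuming $G$ is $2$-connected, the outer face is bounded by a cycle $C$; if $C$ has a chord $uv$, split $G$ along $uv$ into two smaller plane graphs with $uv$ as outer edge, induct on both, and again use the freedom in $c$ to make the two colourings agree on $u$. Third, when $C$ is chordless, reduce on an inner face $f$ sharing an edge with $C$: pick a suitable interior vertex of $f$ to act as $f$'s red capital, delete it, and merge $f$ into the outer face of the smaller graph. (An inner vertex of degree two, or an internal triangle with a vertex not on $C$, gives a convenient reducible configuration for this step.)

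The main obstacle will be controlling condition (6). Conditions (3)--(5) are naturally hereditary: blue independence, red being interior, and at most one red per inner face all survive subgraph operations. But condition (6) requires \emph{exactly} one blue vertex on each red-free inner face, and this is exactly what can break when a splitting chord or cut vertex gets absorbed into (or re-exposed from) an outer face during recursion. Consequently, the induction must carefully track which vertex of the splitting edge carries the blue capital for which side. In practice this is what forces the hypothesis to be stated in the two-parameter form it has, with the colour of $x$ being either black or blue: one needs precisely this extra degree of freedom to align a blue capital across the split while keeping $y$ black and preserving uniqueness on every newly merged face.
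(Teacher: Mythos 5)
First, a point of reference: the paper does not prove Lemma~\ref{FG2} at all --- it is imported verbatim from Fabrici and G\"oring \cite[Lemma 6]{IFFG} --- so there is no in-paper argument to compare yours against, and I can only judge the sketch on its own terms. The overall shape (induction on $|V(G)|$; split at cut vertices and at chords of the outer cycle; then reduce inside a chordless $2$-connected graph) is sensible, and you correctly identify condition (6) as the crux. The chord step is the one part that genuinely closes: after colouring the side containing $xy$, the two ends of the chord are non-red by condition (4) and not both blue by condition (3), so the other side can be coloured with the chord playing the role of $xy$, and every inner face of $G$ lies wholly in one side.

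The other two reductions have real gaps. For the cut-vertex case, the decomposition fails when a block is embedded inside an \emph{inner} face of another block: the outer boundary of the inner block becomes part of an inner face of $G$, and the inductive hypothesis places no upper bound on the number of blue vertices on an outer boundary. Concretely, let $G$ be the triangle $xya$ with a triangle $xuw$ drawn inside it meeting it only at $x$, and take $c=\mbox{black}$. Applying the lemma to each block forces exactly one blue vertex on $\{x,y,a\}$ (namely $a$) and one on $\{x,u,w\}$ (namely $u$ or $w$), so the annular face of $G$ bounded by both triangles acquires two blue vertices and no red vertex, violating condition (6); the valid colouring instead makes $u$ red, which the block viewpoint forbids since $u$ lies on the outer face of its block. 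Your one bit of reconciliation freedom at the cut vertex cannot repair a violation occurring away from it. For the chordless case, the named configurations (a degree-two interior vertex, or an internal triangle with a vertex off $C$) need not exist, so the case analysis is not exhaustive; moreover, deleting an interior vertex $v$ does not merge $f$ into the outer face but into a new \emph{inner} face of $G-v$, and the inductive hypothesis gives no control over which inner faces of $G-v$ carry a red vertex, so declaring $v$ red may put two red vertices on one face of $G$, violating condition (5). To make that step work you must arrange for the merged face to be the outer face of the reduced graph (the only face guaranteed red-free) and then re-verify conditions (3) and (6) on every face you re-create; the sketch does neither.
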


The proof of Theorem \ref{my1} uses a stronger version of Lemma \ref{FG2}. The version differs by adding the condition that all triangles contain at least one blue or red vertex. Throughout the following proof we use the terminology separating cycles, which is a cycle such that when removed it disconnects the graph.

\begin{lem}
Let $G$ be a plane graph without 2-faces, let $xy \in E(G)$ be an edge of $G$ incident with the outer face, and let $c \in \{\mbox{black}, \mbox{blue}\}$. There is a non-proper 3-vertex-colouring of $G$ with colours red, blue and black such that
\begin{enumerate}
\item
vertex $x$ has colour $c$,
\item
vertex $y$ is black,
\item
each edge is incident with at most one blue vertex,
\item
no vertex incident with the outer face is red,
\item
each inner face is incident with at most one red vertex,
\item
each inner face that is not incident with a red vertex is incident with exactly one blue vertex, and
\item
each triangle contains at least one vertex that is not black.
\end{enumerate}
\label{my2} 
\end{lem}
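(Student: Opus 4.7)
The plan is to prove Lemma~\ref{my2} by induction on $|V(G)|$, with base cases ($|V(G)| \le 3$) being immediate. The key structural observation is that a triangle of $G$ fails to be a $3$-face exactly when it is a separating triangle, and since $G$ has no $2$-faces, any pair of parallel edges must form a separating $2$-cycle. These are precisely the configurations where the extra condition $7$ is non-trivial compared to Lemma~\ref{FG2}.

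If $G$ has no separating cycle of length $2$ or $3$, then $G$ has no parallel edges and every triangle of $G$ is already a $3$-face; Lemma~\ref{FG2} then yields a colouring satisfying conditions $1$--$6$, and conditions $5$ and $6$ force each $3$-face (hence each triangle) to carry a non-black vertex, giving condition $7$ for free.

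Otherwise, fix a separating $2$- or $3$-cycle $T$ and split $G$ along $T$ into $G_1$ (the side containing $xy$, together with $T$) and $G_2$ (the other side, together with $T$). Apply the inductive hypothesis to $G_1$ with data $(x,y,c)$ to obtain a colouring $\phi_1$. Conditions $3$, $5$, $6$ applied to the inner face of $G_1$ bounded by $T$ leave only a handful of possible colour patterns on $T$; based on this pattern, select an edge $x'y' \subseteq T$ and a colour $c' \in \{\text{black}, \text{blue}\}$, apply induction to $G_2$ with $(x', y', c')$ to obtain $\phi_2$, and merge $\phi_1$ with $\phi_2$ along $T$ to produce the desired colouring of $G$. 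Any triangle of $G$ is entirely inside $G_1$, entirely inside $G_2$, or equals $T$; the first two inherit condition $7$ from the inductive colourings, while $T$ itself inherits a non-black vertex from $\phi_1$.

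The main obstacle is the case $|T|=3$: the inductive call on $G_2$ directly controls the colours of only two of the three vertices of $T$, so the third vertex's $\phi_2$-colour must be forced to match $\phi_1$ by a careful choice of $(x', y', c')$. A case analysis on the restriction of $\phi_1$ to $T$---up to rotation one of the patterns $(\text{black}, \text{black}, \text{blue})$, $(\text{black}, \text{black}, \text{red})$, $(\text{black}, \text{blue}, \text{red})$---identifies the right choice. For instance, in the first pattern, taking $x'$ to be the blue vertex with $c' = \text{blue}$ and $y'$ a black neighbour forces the third vertex of $T$, being adjacent to a blue vertex and on the outer face of $G_2$, to be black in $\phi_2$ via conditions $3$ and $4$. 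The patterns involving a red vertex are handled similarly, with an extra argument (possibly a local recolouring along $T$ after both inductive calls) ensuring that inheriting the red from $\phi_1$ does not produce a second red vertex on any inner face of $G_2$ touching $T$.
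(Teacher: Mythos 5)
Your overall strategy is the same as the paper's: induct on the number of vertices, reduce the cycle-free case to Lemma~\ref{FG2}, and otherwise split along a separating $2$- or $3$-cycle, recurse on both sides, and match the two colourings on the cut cycle. The blue pattern $(\text{black},\text{black},\text{blue})$ is handled exactly as in the paper. However, there are two genuine gaps.

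First, your claim that when $G$ has no separating $2$- or $3$-cycles Lemma~\ref{FG2} gives condition $7$ ``for free'' is false when the outer face is itself a triangle: conditions $5$ and $6$ govern only \emph{inner} faces, so with $c=\text{black}$ the outer triangle can end up entirely black (red is forbidden on the outer face and nothing forces a blue vertex there). The paper must treat this case separately, by moving $x$ to the third vertex of the outer triangle and invoking Lemma~\ref{FG2} with $c=\text{blue}$. Second, and more seriously, your treatment of the patterns on $T$ containing a red vertex is not an argument. You recurse on ``the other side, together with $T$,'' but then the red vertex of $T$ lies on the outer face of the graph passed to the induction, where condition $4$ forbids red --- so $\phi_2$ cannot agree with $\phi_1$ on $T$, and no obvious ``local recolouring'' repairs this while preserving conditions $5$ and $6$ on the inner faces of $G_2$ meeting that vertex. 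The paper's mechanism is different: it \emph{deletes} the red vertex $t_1$ and recurses on the graph induced by $V(G_2)\cup\{t_2,t_3\}$; the faces of $G$ inside $T$ incident with $t_1$ then lie on the outer face of the recursed graph, so condition $4$ guarantees they acquire no second red vertex, and they are exempt from condition $6$ because they already contain the red $t_1$. (An analogous deletion trick is needed in the $2$-cycle case when one endpoint is red, and there you must also keep only one of the two parallel edges when forming the outer piece, or you create a forbidden $2$-face.) Without the deletion idea the red cases, which are the heart of the lemma, remain unproved.
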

\begin{proof}
We proceed by induction on the number of vertices. Let $xy \in E(G)$ be an edge of $G$ incident with the outer face and $c \in \{\mbox{black}, \mbox{blue}\}$. If $G$ has no separating cycles of length two or three, then Lemma~\ref{FG2} yields the statement unless the outer face is 3-face (note that the outer face is not a 2-cycle since $G$ has no 2-faces). If $c =$ blue and the outer face is a 3-face, Lemma~\ref{FG2} also yields the statement. If $c =$ black, switch $x$ to the vertex of the outer 3-face different from $x$ and $y$, let $c =$ blue and apply Lemma~\ref{FG2}. Note that the vertices that were originally $x$ and $y$ must be black since $G$ has no edge with two blue end vertices.\\

Assume there are separating cycles of length two and consider an inner most separating cycle of length two $C$. Use $u_1$ and $u_2$ for the vertices of this cycle. Let $G_1$ be the graph contained strictly outside $C$ and $G_2$ be the graph contained strictly inside $C$, therefore the vertex sets of $G_1$, $G_2$ and $C$ partition the vertex set of $G$. We call the graph induced on the vertices of $G_1$ and $C$ $G_1 \cup \{u_1, u_2\}$ whereas we call $G_1 \cup \{u_1, u_2\}$ without one of the edges contained in $C$ the graph $G_1 + u_1u_2$. See Figure 1 for set up.

\begin{center}
\begin{tikzpicture}[scale = 1]
\draw (0,0) circle (0.25 cm);
\draw (0,0) circle (1.5 cm);
\node at (1.9,0) {$G_1$};
\node at (0.6,0) {$C$};
\node at (0,0) {$G_2$};
\node (G-11) at (0,1) [circle,fill=black] [label=left:$u_1$] {};
\node (G-12) at (0,-1) [circle,fill=black] [label=left:$u_2$] {};
\draw[-][black] (G-11) edge [bend left] (G-12);
\draw[-][black] (G-12) edge [bend left] (G-11);
\end{tikzpicture}\\
Figure 1: Cycle $C$.
\end{center}

$G_2$ is not empty as $C$ is a separating cycle. Next apply the induction assumption on the graph $G_1 + u_1u_2$ (i.e. there is only one of the edges of $C$ present). Then given $xy$ and $c$, induction hypothesis guarantees a 3-colouring of $G_1 + u_1u_2$ with the properties as desired in the statement. As $u_1u_2$ is an edge, there are two possibilities for the colours of the vertices $u_1$ and $u_2$: one vertex is red and the other is coloured with $c'$ and one vertex is black and the other coloured with $c'$, where $c' \in \{\mbox{blue}, \mbox{black}\}$. See figure 2 for these cases.\\

\begin{center}
\begin{tikzpicture}[scale=1]
\draw (0,0) circle (0.27 cm);
\draw (0,0) circle (1.5 cm);
\node at (1.9,0) {$G_1$};
\node at (0.6,0) {$C$};
\node at (0,0) {$G_2$};
\node (G-11) at (0,1) [diamond,fill=black] [label=left:$u_1$] {};
\node (G-12) at (0,-1) [rectangle,fill=black] [label=left:$u_2$] {};
\draw[-][black] (G-11) edge [bend left] (G-12);
\draw[-][black] (G-12) edge [bend left] (G-11);
\node at (-5,1) [circle, fill=black] [label=right:black] {};
\node at (-5,0) [rectangle, fill=black] [label=right:c'] {};
\node at (-5,-1) [diamond,fill=black] [label=right:red] {};
\end{tikzpicture}
\hspace*{0.5 in}
\begin{tikzpicture}[scale=1]
\draw (0,0) circle (0.27 cm);
\draw (0,0) circle (1.5 cm);
\node at (1.9,0) {$G_1$};
\node at (0.6,0) {$C$};
\node at (0,0) {$G_2$};
\node (G-11) at (0,1) [circle,fill=black] [label=left:$u_1$] {};
\node (G-12) at (0,-1) [rectangle,fill=black] [label=left:$u_2$] {};
\draw[-][black] (G-11) edge [bend left] (G-12);
\draw[-][black] (G-12) edge [bend left] (G-11);
\end{tikzpicture}\\
Figure 2: Two possibilities for cycle $C$ after colouring $G_1 \cup \{u_1, u_2\}$ with inductive assumption.
\end{center}

\paragraph{One vertex red and one vertex coloured with $c'$.} Assume that $u_1$ is red. Consider the induced graph on $V(G_2) \cup \{u_2\}$. Then if $u_2$ is joined to a vertex in $G_2$ take an edge $u_2 v$ on the outer face. If $u_2$ is not joined to a vertex in $G_2$ take a vertex $v$ on the outer face of $G_2$, and add edge $u_2 v$. Then apply the inductive assumption to the constructed graph with $u_2$ as $x$ with colour $c'$ and $v$ as $y$ coloured black. The constructed 3-colouring matches up with the one of $G_1 + u_1u_2$ and gives the desired 3-colouring.

\paragraph{One vertex black and one vertex coloured with $c'$.} Say that $u_1$ is black. Since $C$ does not bound a face, the graph $G_2+u_1u_2$ contains another vertex on its outer face. Let $v$ be this vertex. Apply the inductive assumption to the graph $(G_2+u_1u_2)\backslash\{v\}$ with $u_1$ as $y$ coloured black and $u_2$ as $x$ coloured $c'$. Then colour $v$ red to get a 3-colouring on $G_2 \cup \{u_1, u_2\}$. The constructed 3-colouring matches up with the one on $G_1 + u_1u_2$ and gives the desired 3-colouring.\\

Assume there are no separating cycles of length two but $G$ has separating cycles of length three. Let $T$ be an inner most triangle. Use $t_1$, $t_2$ and $t_3$ for the vertices of this triangle. Let $G_1$ be the graph strictly contained outside $T$ and $G_2$ be the graph strictly contained inside $T$. See figure 3 for set up.

\begin{center}
\begin{tikzpicture}[scale = 0.7]
\draw (1.5,1) circle (0.5cm);
\draw (1.5,1) circle (2.7cm);
\node at (-0.8,3.2) {$G_1$};
\node at (2.4,1.8) {$T$};
\node at (1.5,1) {$G_2$};
\node (G-11) at (0,0) [circle,fill=black] [label=left:$t_3$] {};
\node (G-12) at (3,0) [circle,fill=black] [label=right:$t_2$] {};
\node (G-13) at (1.5,2.6) [circle,fill=black] [label=above:$t_1$] {};
\draw[-][black] (G-11) edge (G-12);
\draw[-][black] (G-12) edge (G-13);
\draw[-][black] (G-13) edge (G-11);
\end{tikzpicture}\\
Figure 3: Triangle $T$.
\end{center}

$G_2$ is not empty as $T$ is a separating triangle. Apply the induction assumption on the graph $G_1 \cup T$ with $xy$ and $c$ to get a 3-colouring of $G_1 \cup T$ with the properties as desired in the statement. As $T$ bounds an inner face in $G_1 \cup T$, some of its vertices must be coloured with blue or red and there are three possibilities: one vertex is red, blue and black, one vertex is red and two are black, or one vertex is blue and two are black. We now consider these three cases, demonstrated in figure 4.\\

\begin{center}
\begin{tikzpicture}[scale=0.7]
\draw (1.5,1) circle (0.5cm);
\draw (1.5,1) circle (2.7cm);
\node at (-0.8,3.2) {$G_1$};
\node at (2.4,1.8) {$T$};
\node at (1.5,1) {$G_2$};
\node at (-4,2) [circle, fill=black] [label=right:black] {};
\node at (-4,1) [rectangle, fill=black] [label=right:blue] {};
\node at (-4,0) [diamond,fill=black] [label=right:red] {};
\node (G-11) at (0,0) [circle, fill=black] [label=left:$t_3$] {};
\node (G-12) at (3,0) [rectangle, fill=black] [label=right:$t_2$] {};
\node (G-13) at (1.5,2.6) [diamond,fill=black] [label=above:$t_1$] {};
\draw[-][black] (G-11) edge (G-12);
\draw[-][black] (G-12) edge (G-13);
\draw[-][black] (G-13) edge (G-11);
\end{tikzpicture} \hspace*{0.1 in}
\begin{tikzpicture}[scale=0.7]
\draw (1.5,1) circle (0.5cm);
\draw (1.5,1) circle (2.7cm);
\node at (-0.8,3.2) {$G_1$};
\node at (2.4,1.8) {$T$};
\node at (1.5,1) {$G_2$};
\node (G-11) at (0,0) [circle, fill=black] [label=left:$t_3$] {};
\node (G-12) at (3,0) [circle, fill=black] [label=right:$t_2$] {};
\node (G-13) at (1.5,2.6) [diamond, fill=black] [label=above:$t_1$] {};
\draw[-][black] (G-11) edge (G-12);
\draw[-][black] (G-12) edge (G-13);
\draw[-][black] (G-13) edge (G-11);
\end{tikzpicture} \hspace*{0.1 in}
\begin{tikzpicture}[scale=0.7]
\draw (1.5,1) circle (0.5cm);
\draw (1.5,1) circle (2.7cm);
\node at (-0.8,3.2) {$G_1$};
\node at (2.4,1.8) {$T$};
\node at (1.5,1) {$G_2$};
\node (G-11) at (0,0) [circle, fill=black] [label=left:$t_3$] {};
\node (G-12) at (3,0) [circle, fill=black] [label=right:$t_2$] {};
\node (G-13) at (1.5,2.6) [rectangle, fill=black] [label=above:$t_1$] {};
\draw[-][black] (G-11) edge (G-12);
\draw[-][black] (G-12) edge (G-13);
\draw[-][black] (G-13) edge (G-11);
\end{tikzpicture}\\
Figure 4: Three possibilities for triangle $T$ after colouring $G_1 \cup T$ with inductive assumption.
\end{center}

\paragraph{One vertex of each colour.} Without loss of generality assume $t_1$ is red, $t_2$ is blue and $t_3$ is black. Apply the inductive assumption on the graph induced by $V(G_2) \cup \{t_2, t_3\}$ with $t_2$ as $x$ to be coloured blue, which is the colour $c$, and $t_3$ as $y$ to be black. The 3-colourings on $G_1 \cup T$ and the graph induced by $V(G_2) \cup \{t_2, t_3\}$ match up and give a colouring of $G$ having the desired properties.

\paragraph{Two black vertices and one red.} Assume $t_1$ is red. The inductive assumption is applied on the graph induced by $V(G_2) \cup \{t_2, t_3\}$ again with $t_2t_3$ being $xy$ to be both coloured black. The 3-colourings on $G_1 \cup T$ and the graph induced by $V(G_2) \cup \{t_2, t_3\}$ match up on $t_2t_3$ and give us a 3-colouring as described in the statement of Lemma \ref{my2}.

\paragraph{Two black vertices and one blue.} Let $t_1$ be blue. Apply the inductive assumption on the graph $G_2 \cup T$ with $t_1$ as $x$ to be coloured blue and $t_2$ as $y$ to black. The 3-colourings on $G_1 \cup T$ and $G_2 \cup T$ match up ($t_3$ cannot be coloured blue as it is connected to $t_1$ therefore as it is on the outer face it must be black) and give us the required 3-colouring.
\end{proof}

Lemma \ref{my2} in conjunction with Gr\"{o}tzsch theorem, yields a proof of Theorem \ref{my1}. We recall the statement of Gr\"{o}tzsch theorem and complete the proof of Theorem \ref{my1}.

\begin{thm}
(Gr\"{o}tzsch \cite{HG59}) Every triangle-free planar graph $G$ has a proper 3-colouring.
\end{thm}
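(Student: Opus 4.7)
The plan is to argue by minimum counterexample. Let $G$ be a triangle-free plane graph on the fewest vertices admitting no proper 3-colouring, and aim to show that such a $G$ is so structurally restricted that it cannot exist. Easy opening reductions let me assume $G$ is $3$-connected with minimum degree at least $3$: a vertex $v$ of degree at most $2$ can be deleted, $G - v$ 3-coloured by minimality, and $v$ reinserted using a free colour; a cut of size at most $2$ lets one 3-colour each side by minimality and glue the pieces, permuting colours on one side so the colourings agree on the cut.

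The next step is to forbid separating 4-cycles. If $C = v_1v_2v_3v_4$ is a separating $4$-cycle, split $G$ along $C$ into an inner and outer graph, both strictly smaller and still triangle-free; 3-colour each by minimality and, after possibly permuting colours on one side, merge them into a proper 3-colouring of $G$, a contradiction. With separating $4$-cycles forbidden, the heart of the structural argument is the following reducible configuration: in a $4$-face $v_1v_2v_3v_4$, if $v_1$ and $v_3$ have no common neighbour outside the face, then identifying $v_1$ with $v_3$ yields a strictly smaller triangle-free plane graph $G'$, whose 3-colouring pulls back to one of $G$. Further local moves (two $3$-vertices on a common $4$-face, a short face incident to many low-degree vertices, etc.) are ruled out in the same spirit.

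The proof is then closed by a discharging argument. Assign each vertex $v$ charge $d(v) - 4$ and each face $f$ charge $\ell(f) - 4$; by Euler's formula the total charge is $-8$. Design redistribution rules that transfer charge from long faces and high-degree vertices to $4$-faces and low-degree vertices, and verify that, in the absence of the reducible configurations above, every vertex and face ends with non-negative charge, contradicting the negative total. The main obstacle, and essentially the entire technical content of the proof, is calibrating the reducibility list and the discharging rules so that they match: every locally negative final charge must be witnessed by a configuration that has already been eliminated as reducible. Getting these two sides to align is where Grötzsch's original proof, and the later simplifications by Thomassen and others, spend all of their effort.
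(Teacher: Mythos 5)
First, a point of reference: the paper does not prove this statement at all --- it is quoted as Gr\"otzsch's theorem \cite{HG59} and used as a black box in the proof of Theorem \ref{my1}, so there is no ``paper proof'' to compare against. Judged on its own terms, your proposal is a plan for a proof rather than a proof: the final paragraph explicitly defers the entire technical content (``design redistribution rules \ldots and verify \ldots calibrating the reducibility list and the discharging rules so that they match''). No discharging rules are stated and no configuration is actually verified to be reducible, so nothing is established.

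Beyond the incompleteness, at least one of the steps you do describe would fail as written. Gluing along a separating $4$-cycle $C=v_1v_2v_3v_4$ ``after possibly permuting colours on one side'' does not work: proper $3$-colourings of the inner and outer pieces restrict to $C$ in patterns that are not all equivalent under colour permutations (for instance, one side may force $v_1,v_3$ to receive the same colour while the other forces them to differ), so the two colourings need not be mergeable. This is precisely why separating $4$-cycles and $4$-faces are the genuinely hard part of Gr\"otzsch's theorem; the standard resolutions (Gr\"otzsch's original argument, or Thomassen's) prove a stronger precolouring-extension statement for short outer faces, or argue that for a $4$-face one of the two pairs of opposite vertices can be identified without creating a triangle --- a claim that itself needs a careful case analysis you have not supplied. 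The $2$-cut gluing step has the same defect when the two cut vertices are non-adjacent. As it stands the proposal identifies the right landmarks but does not contain an argument.
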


\begin{proof}[Proof of Theorem \ref{my1}]
The goal of this proof is to find a non-proper 3-colouring of $G$ that has either a single red vertex or no red vertex and a single blue vertex on each face but won't abide by all the rules of Lemma~\ref{my2}. Then to convert this non-proper 3-colouring to a Capital colouring of $G$. If $G$ has 2-faces replace these with a single edge, then a capital colouring of the altered graph is a capital colouring of the original graph. So without loss of generality assume $G$ has no 2-faces. Choose a vertex $v \in V(G)$ on the outer face and then apply Lemma \ref{my2} to the graph $G \backslash v$, picking any edge $xy$ on the outer face with any colour $c \in \{\mbox{black}, \mbox{blue}\}$. Then let $v$ be coloured red to get a 3-colouring. Note that each face has either exactly one red vertex (such as any face containing $v$), or no red vertex and exactly one blue vertex. Moreover, every triangle contains at least one red or blue vertex. Let $H$ be the subgraph of $G$ induced by the black vertices. As $H$ is triangle-free, by Gr\"{o}tzsch theorem, there exists a proper 3-colouring of it using $\{1,2,3\}$. Then assign blue vertices the colour $4$ and red vertices the colour $5$. The constructed 5-colouring is proper and has a unique maximal colour on each face from the construction.
\end{proof}

\section{List Colouring}

\tab In this section we will present an upper bound for the capital list colouring of a plane graph G.

\begin{mydef}
A list assignment is a function $L: V(G) \rightarrow \mathcal{P}(\mathbb{N})$. A graph $G$ has a capital $L$-colouring if it has a capital colouring $c : V(G) \rightarrow \mathbb{N}$ such that $c(v) \in L(v)$ for every $v \in V(G)$. We say that a graph $G$ is capital $k$-choosable if there is a capital $L$-colouring for all list assignments with $\vert L(v) \vert \geq k$ for all $v \in V(G)$. The minimum $k$ such that $G$ is capital $k$-choosable is denoted by $\chi_{C}^l(G)$.
\end{mydef}

We will prove an upper bound of seven on $\chi_{C}^l(G)$ for any plane graph $G$.

\begin{thm}
If $G$ is a plane graph then $\chi_{C}^l(G) \leq 7$. \label{my3}
\end{thm}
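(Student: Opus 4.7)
The plan is to lift the proof of Theorem~\ref{my1} to the list setting, replacing Gr\"otzsch's theorem by Thomassen's theorem that every triangle-free planar graph is $3$-choosable, and using the four colours of slack in each list (seven instead of the three needed by Thomassen) to accommodate the blue and red vertices.

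As in the proof of Theorem~\ref{my1}, I would first reduce to the case that $G$ has no $2$-faces, pick a vertex $v_0$ on the outer face of $G$, apply Lemma~\ref{my2} to $G\setminus v_0$, and then colour $v_0$ red. This yields a red/blue/black colouring of $G$ in which every face has a unique \emph{capital}, namely the red vertex on the face if such a vertex exists and the unique blue vertex otherwise. Condition~$7$ of Lemma~\ref{my2} guarantees that the subgraph $H$ of $G$ induced by the black vertices is triangle-free. The set of red vertices is independent (any edge with two red endpoints would be incident with two faces each containing at least two red vertices, contradicting condition~$5$; and $v_0$ has no red neighbour because its neighbours are incident with the outer face of $G\setminus v_0$ and so are not red by condition~$4$), and the set of blue vertices is independent by condition~$3$.

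Given this structure I would construct the capital $L$-colouring in three phases. In Phase~$1$, for each red vertex $r$ I would pick $c(r)\in L(r)$. In Phase~$2$, for each blue vertex $b$ I would pick $c(b)\in L(b)$ distinct from the colours of the red neighbours of $b$. In Phase~$3$, for each black vertex $v$ I would let $L'(v)$ be the set of colours in $L(v)$ that are both distinct from the already-assigned colours of the neighbours of $v$ and strictly smaller than $c(\mathrm{cap}(f))$ for every face $f$ incident with $v$. Applying Thomassen's theorem to the triangle-free planar graph $H$ with the list assignment $L'$ then yields a proper colouring of the black vertices which, together with the capital colours from the first two phases, gives the desired capital $L$-colouring of $G$: the colouring is proper by construction, and on each face the capital receives a colour that is strictly larger than all other colours on the face.

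The main obstacle is to arrange the choices in Phases~$1$ and~$2$ so that $|L'(v)|\geq 3$ for every black vertex $v$, which is what Thomassen's theorem requires. Each black list starts with seven colours and Thomassen needs three, so there is a budget of four colours per list to absorb the per-neighbour inequalities from coloured red and blue neighbours together with the per-face upper bounds imposed by the capital colours. I expect this slack to suffice provided that the capital colours are chosen in a coordinated way, for example by picking, for each capital $v$, a colour $c(v)$ near the top of $L(v)$ and by processing the capitals in an order guided by the planar dual of $G$ so that each new choice respects the constraints created by earlier ones. Controlling black vertices that have many blue or red neighbours, or many incident faces whose capitals have small list maxima, is where I expect the bulk of the technical work of the proof to sit.
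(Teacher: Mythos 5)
Your proposal does not follow the paper's route (the paper proves Theorem~\ref{my3} by taking a minimal counterexample, establishing reducible configurations such as Lemma~\ref{my4} and Propositions~\ref{my5}--\ref{my7}, and then running a discharging argument), and unfortunately the route you propose has two genuine gaps, one of which is fatal as stated. First, the key ingredient you invoke does not exist: it is \emph{not} true that every triangle-free planar graph is $3$-choosable. Thomassen's $3$-choosability theorem requires girth at least five, and Voigt constructed a triangle-free planar graph that is not $3$-choosable. The best you can say for the black subgraph $H$ is that it is $3$-degenerate (a triangle-free planar graph has at most $2n-4$ edges) and hence $4$-choosable, which already shrinks your ``budget'' from four to three.

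Second, and more fundamentally, the budget is not a constant at all. A black vertex $v$ may have arbitrarily many blue neighbours (blue is an independent set, but nothing bounds how many blue vertices are adjacent to a single black vertex), and $v$ may be incident with arbitrarily many faces, each contributing the constraint that $c(v)$ be strictly smaller than that face's capital colour. So the reduced list $L'(v)$ can be emptied no matter how large $|L(v)|$ is; no coordination of the capital choices fixes this, because the forbidden sets of distinct capitals over distinct lists need not overlap. (The same problem hits your Phase~2: a blue non-capital vertex lying on a face with a red capital must receive a colour \emph{smaller} than that capital's colour, not merely different from its red neighbours, and it can lie on many such faces.) In the non-list proof of Theorem~\ref{my1} all of this is free because the colour classes $\{1,2,3\}$, $\{4\}$, $\{5\}$ are globally separated, so every black colour is automatically below every capital colour; that separation is exactly what is lost in the list setting, and it is why the paper switches to an entirely different, discharging-based argument rather than lifting Lemma~\ref{my2}.
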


The proof of Theorem \ref{my3} shall use a discharging argument. We assume Theorem \ref{my3} is false and take $G$ to be an extremal counter-example with a list assignment $L$. We say $G$ is extremal if we partially order counter examples by the following criteria and pick the $G$ to be a minimal graph in this ordering, so it has the minimum number of vertices, the minimum number of 2-faces and the maximal number of edges (in this order). Lets first discus why we can pick such a graph $G$.\\

\begin{lem}
Assuming Theorem \ref{my3} is false an extremal counter example $G$ exists.
\end{lem}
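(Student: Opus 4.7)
The plan is to show that the lexicographic ordering on counter-examples, first by number of vertices (minimise), then by number of 2-faces (minimise), then by number of edges (maximise), attains an optimum. First I would observe that without loss of generality one may restrict to counter-examples $(G,L)$ with $|L(v)|=7$ for every $v$: given any counter-example with lists of size $\geq 7$, pass to any sublist $L'(v)\subseteq L(v)$ of size exactly seven, and note that any capital $L'$-colouring is also a capital $L$-colouring, so $(G,L')$ remains a counter-example. This eliminates any issue arising from infinite or unequal list sizes.

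The minimisation would then be carried out in three steps. Since by assumption the class of counter-examples is non-empty and $|V(G)|$ is a positive integer, well-ordering of $\mathbb{N}$ gives a minimum value $n^{*}$ that is attained; restrict to the subclass of counter-examples with exactly $n^{*}$ vertices. The number of 2-faces is a non-negative integer, so this subclass likewise has a smallest value $f^{*}$, and I would restrict further to counter-examples with exactly $n^{*}$ vertices and $f^{*}$ 2-faces.

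The main obstacle is the third step: justifying that $|E(G)|$ attains a \emph{maximum} over this last subclass, rather than being unbounded (otherwise the maximisation criterion would be vacuous). For this I would invoke Euler's formula. In a plane graph with $n$ vertices and $k$ 2-faces, every remaining face has length at least three, giving $2|E|=\sum_{f}\ell(f)\geq 2k+3(F-k)$, hence $F\leq (2|E|+k)/3$. Combined with $V-E+F=2$, this yields $|E(G)|\leq 3n^{*}+f^{*}-6$. Thus the set of possible edge counts among counter-examples with parameters $(n^{*},f^{*})$ is a non-empty bounded subset of $\mathbb{N}$, so it has a maximum $e^{*}$; any counter-example $(G,L)$ realising the triple $(n^{*},f^{*},e^{*})$ is then extremal in the prescribed sense.
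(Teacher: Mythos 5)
Your proof is correct and follows essentially the same route as the paper's: fix the minimum number of vertices, then the minimum number of 2-faces, then observe that the number of edges is bounded so its maximum is attained. Your Euler-formula computation $|E(G)|\leq 3n^{*}+f^{*}-6$ simply makes explicit the boundedness claim that the paper asserts without proof, and the preliminary reduction to lists of size exactly seven is harmless but unnecessary, since the ordering is on graphs rather than on list assignments.
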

\begin{proof}
As Theorem \ref{my3} is false a counter example exists, therefore we can partially order the counter examples with respect to the criteria above. As the number of vertices and the number of 2-faces are countable we can find a set of extremal counter examples with respect to these criteria. Then with a set number of vertices and 2-faces the number of edges is bounded, therefore we can pick a $G$ with the maximum number of edges.
\end{proof}

Before examining properties of a extremal counter-example, we introduce some notation. A vertex of degree $d$ is called a $d$-vertex and a $\geq d$-vertex is a vertex of degree at least $d$. A $d$-face is a face incident with exactly $d$ edges and $\geq d$-face is a face incident with at least $d$ edges. If $f$ is a face, then we write $c(f)$ to be the maximal colour of $f$ under a colouring $c$.

\subsection{Reducible Configurations}

\tab In this subsection, we will explore properties of a extremal counter-example $G$ with list colouring $L$.

\begin{lem}
Let $G$ be a extremal counter-example. \label{my4}
\begin{enumerate}
 \item
 $G$ is 2-connected: in other words $G$ is connected and no vertex can be removed to disconnected $G$.
 \item
 For all vertices $v \in V(G)$, if $v$ is adjacent to $k$ vertices and $l$ faces of size at least four then $k + l \geq 7$.
 \item 
 Each vertex of $G$ is a $\geq 4$-vertex.
 \item
 Each face of $G$ is a $\geq 3$-face. 
 \item
 No two 3-faces share a 4-vertex.
\end{enumerate}
\end{lem}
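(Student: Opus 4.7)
The plan is to prove each of the five items in Lemma~\ref{my4} by contradiction against the extremality of $G$. In each case, supposing the configuration in question appears in $G$, I construct a plane graph $G'$ that is strictly smaller than $G$ in the lexicographic ordering (fewer vertices; or the same vertices and fewer $2$-faces; or the same of both and more edges). Extremality guarantees that $G'$ admits a capital $L'$-colouring for a list assignment $L'$ obtained by restricting or slightly modifying $L$, and the main work is to lift this colouring back to a capital $L$-colouring of $G$, contradicting that $G$ is a counter-example.

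For item (1), if $G$ is disconnected, any component is a smaller counter-example. If $G$ has a cut-vertex $w$, I split $G$ along $w$ into plane subgraphs $G_1, G_2$ meeting only at $w$; each has fewer vertices, so each has a capital $L$-colouring, and these are patched together on $w$. For item (2), I delete $v$, apply extremality to $G - v$, and extend to $v$: each of the $k$ neighbours forbids one colour at $v$, each of the $l$ incident $\geq 4$-faces forbids at most one colour (the current unique maximum must remain strictly larger than the colour assigned to $v$), while the $3$-faces incident to $v$ impose no independent constraint beyond the two neighbours of $v$ on that $3$-face. With $k + l < 7 = |L(v)|$, a valid colour for $v$ exists. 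Items (3) and (4) follow quickly from the machinery above: a $\leq 3$-vertex is incident to at most three faces, giving $k + l \leq 6$ and violating (2); and a $2$-face can be destroyed by deleting one of the two parallel edges bounding it, yielding $G'$ with the same vertex set and strictly fewer $2$-faces, whose capital colouring also serves as one for $G$ since the two endpoints of the $2$-face are properly coloured and hence one is the unique maximum there.

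For item (5), assume two $3$-faces share a $4$-vertex $v$. The four neighbours of $v$ arrange themselves in two sharing triangles, creating a rigid local picture. I plan to remove $v$ (together with the two triangles incident to it), identify a suitable pair of its neighbours, or insert a chord, taking care to obtain a $G'$ strictly smaller in the ordering whose capital $L$-colouring then extends back to $v$ via the same forbidden-colour count used for item (2), exploiting that $v$ has degree only four.

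The main obstacle, I expect, is item (1): list colouring does not tolerate the colour-permutation trick available in ordinary chromatic-number arguments, so merging capital $L$-colourings of $G_1$ and $G_2$ on the cut-vertex $w$ requires more care. I anticipate handling this by fixing some $c \in L(w)$, restricting $L(w)$ to $\{c\}$ in each subgraph, and re-running the extremality argument to show that each $G_i$ admits a capital colouring with $w$ pre-assigned $c$. Similar care is needed for item (5), since one must track exactly how faces merge under vertex deletion or identification to verify that the capital condition continues to hold on the new faces of $G$.
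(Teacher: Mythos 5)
There are genuine gaps, and the most important one runs through items (1), (2) and (5): you never invoke the edge-maximality built into the definition of an extremal counter-example, and without it your reductions do not preserve the unique-maximum condition face by face. In item (2), if you simply delete $v$ and colour $G-v$, all the faces of $G$ incident to $v$ merge into a single face of $G-v$; the capital colouring of $G-v$ then guarantees a unique maximum only on that merged face, so an individual $\geq 4$-face $f$ of $G$ at $v$ may already carry its maximum colour on two of its vertices, and no choice of colour for $v$ can repair this. The paper's proof deletes $v$ \emph{and adds the cycle} $v_1v_2,\dots,v_{k-1}v_k,v_kv_1$ through the former neighbourhood, so that every face of $G$ at $v$ survives as a face of the reduced graph (with only $v$ removed from its boundary) and hence genuinely forbids at most one colour at $v$; the $3$-faces at $v$ are then handled by properness alone, exactly as you say, but that step presupposes the added edges. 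Item (5) needs none of the surgery you sketch: a $4$-vertex lying on two $3$-faces has $k\leq 4$ and $l\leq 2$, so it violates item (2) directly --- the paper's proof is one line.

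Item (1) is also unsound as proposed. If $G$ is disconnected it is not true that some component is a smaller counter-example: each component may well admit a capital $L$-colouring on its own, and the obstruction is the face of $G$ shared by the components, on which the two colourings may realise the same maximum at two vertices. Likewise, splitting at a cut-vertex $w$ and patching colourings of $G_1$ and $G_2$ founders on the face that passes through $w$ on both sides, and your proposed fix of restricting $L(w)$ to a single colour is not available: extremality only applies to list assignments in which every list has size at least $7$. The paper avoids all of this by \emph{adding} an edge (a bridge between the two components inside their common face, or a chord $v_1v_2$ cutting a triangle $v_1wv_2$ off the face that visits $w$ twice): the resulting graph has the same vertices, no new $2$-faces and strictly more edges, so by edge-maximality it is not a counter-example, and one checks that the vertex sets of the affected faces are unchanged, so its capital $L$-colouring is already a capital $L$-colouring of $G$. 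Items (3) and (4) of your proposal are fine and agree with the paper.
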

\begin{proof}
\begin{enumerate}
 \item
If $G$ is disconnected, then it has different components $G_1$ and $G_2$. Pick a vertex on the face shared by $G_1$ and $G_2$ for both $G_1$ and $G_2$ and then add an edge between the two. By the extremity of $G$, specifically the maximality in terms of edges as we are not increasing the number of vertices or 2-faces, we can find an $L$-colouring. This colouring is also a colouring of the original graph.\\

Suppose $G$ has a cut vertex $v$ with a face $f$ on two sides, let $v_1vv_2$ and $v_3vv_4$ be walks on the boundary of face $f$, see Figure 5. Consider the graph $H$ which is $G$ with an additional edge $v_1v_2$. By extremity of $G$, $H$ has an $L$-colouring as $H$ has no vertices or 2-faces as any 2-face would be one of $G$. Then this is a capital $L$-colouring on $G$.

\begin{center}
\begin{tikzpicture}[scale = 0.8]
\node at (0,1.5) {$f$};
\node at (0,1.5) {$f$};
\node (G-1) at (0,0) [circle,fill=black] [label=above:$v$] {};
\node (G-2) at (-1,1) [circle,fill=black] [label=above:$v_1$] {};
\node (G-3) at (-1,-1) [circle,fill=black] [label=below:$v_3$] {};
\node (G-4) at (1,1) [circle,fill=black] [label=above:$v_2$] {};
\node (G-5) at (1,-1) [circle,fill=black] [label=below:$v_4$] {};
\node (G-21) at (2,-3) {};
\node (G-22) at (3,-1) {};
\node (G-23) at (3,-1) {};
\node (G-24) at (2,3) {};
\draw (-1,-1) --(-2,-1.5) -- (-3,-1) -- (-3,1) -- (-2, 1.5) -- (-1,1);
\draw (1,-1) --(2,-1.5) -- (3,-1) -- (3,1) -- (2, 1.5) -- (1,1);
\draw[-][black] (G-1) edge (G-2);
\draw[-][black] (G-1) edge (G-3);
\draw[-][black] (G-1) edge (G-3);
\draw[-][black] (G-1) edge (G-4);
\draw[-][black] (G-1) edge (G-5);
\end{tikzpicture}
\hspace*{0.5 in}
\begin{tikzpicture}[scale = 0.8]
\node at (0,1.5) {$f$};
\node at (0,1.5) {$f$};
\node (G-1) at (0,0) [circle,fill=black] [label=above:$v$] {};
\node (G-2) at (-1,1) [circle,fill=black] [label=above:$v_1$] {};
\node (G-3) at (-1,-1) [circle,fill=black] [label=below:$v_3$] {};
\node (G-4) at (1,1) [circle,fill=black] [label=above:$v_2$] {};
\node (G-5) at (1,-1) [circle,fill=black] [label=below:$v_4$] {};
\node (G-21) at (2,-3) {};
\node (G-22) at (3,-1) {};
\node (G-23) at (3,-1) {};
\node (G-24) at (2,3) {};
\draw (-1,-1) --(-2,-1.5) -- (-3,-1) -- (-3,1) -- (-2, 1.5) -- (-1,1);
\draw (1,-1) --(2,-1.5) -- (3,-1) -- (3,1) -- (2, 1.5) -- (1,1);
\draw[-][black] (G-1) edge (G-2);
\draw[-][black] (G-1) edge (G-3);
\draw[-][black] (G-1) edge (G-3);
\draw[-][black] (G-1) edge (G-4);
\draw[-][black] (G-1) edge (G-5);
\draw[-][black] (G-2) edge (G-4);
\end{tikzpicture}\\
Figure 5: Configuration of Proposition \ref{my4} and reduction.
\end{center}

\item
Suppose $G$ has a vertex $v \in V(G)$ such that $k + l \leq 6$. Let $v_1, \ldots, v_k$ be the neighbours of $v$ in the cyclic order. Remove $v$ and add edges $v_1v_2$, $v_2v_3$, $\ldots$ $v_{k-1}v_k$ and $v_kv_1$. By extremity of $G$ we can colour the remaining graph from the lists $L$, as the graph remaining has less vertices. Then assign a colour to $v$ from its list that is not assigned to its neighbours and that is not the maximal colour on any of the incident faces. Since there are at most $k+l$ such colours, there is a colour in $L(v)$ that can be assigned to $v$. On the faces containing $v$, the maximal colour of the face is either on the vertex that had the maximal colour in the modified graph or on $v$. Therefore $G$ has a capital $L$-colouring.
\item
This follows from Part 2 as $l \leq k$ for any vertex.
\item
If $f$ is a 2-face remove one of the edges. By the extremity of $G$, we can find a capital $L$-colouring. This colouring is still a capital colouring of the original graph.
\item
Follows directly from Part 2.
\end{enumerate}
\end{proof}

We now look at a 4-face sharing an edge with a 3-face.

\begin{prop}
 In a extremal counter-example $G$, no 3-face and 4-face can share an edge joining two 4-vertices. \label{my5}
 \end{prop}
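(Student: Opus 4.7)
The plan is to argue by contradiction: assume the configuration exists in the extremal counter-example $G$ and construct a strictly smaller graph whose capital $L$-colouring extends to one of $G$. Let $T = abc$ be the 3-face and $F$ the 4-face with cyclic boundary $(a, b, d, e)$ sharing the edge $ab$, with $\deg(a) = \deg(b) = 4$. Let $a'$ and $b'$ be the fourth neighbours of $a$ and $b$ respectively. By Lemma \ref{my4}(2), since each of $a, b$ has degree $4$ and is incident to the 3-face $T$, the other three faces incident to each vertex must all be $\geq 4$-faces; denote the two faces besides $F$ incident to $a$ by $F_1$ (incident to edge $ac$) and $F_2$ (incident to edge $ae$), and let $H_1, H_2$ be the analogous pair at $b$.

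I would construct a reduced graph $G'$ by deleting $\{a, b\}$ from $G$ and adding short-cut edges $ca'$, $a'e$, $db'$, $b'c$ inside the merged face, so that $F_1, F_2, H_1, H_2$ persist as faces in $G'$ (each with one fewer vertex) and the region formerly occupied by $T \cup F$ collapses into a new central 5-face with boundary $(c, a', e, d, b')$. Provided $G'$ is a simple plane graph without new 2-faces---handled by a case analysis on pre-existing adjacencies among $\{c, a', e, d, b'\}$, each sub-case either making the corresponding short-cut redundant or producing a smaller reducible configuration---$G'$ has strictly fewer vertices than $G$, so by extremality it admits a capital $L$-colouring $c$. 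Under $c$ each of $F_1, F_2, H_1, H_2$ has a unique maximum vertex; let the corresponding max colour values be $m_1, m_2, n_1, n_2$.

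Next I would extend $c$ to $a, b$. For vertex $a$ the forbidden colours are $\{c(c), c(e), c(a'), m_1, m_2\}$, at most $5$ values, so the candidate set $A \subseteq L(a)$ satisfies $|A| \geq 7 - 5 = 2$; symmetrically $|B| \geq 2$ for $b$. The 3-face $T = abc$ automatically has a unique maximum in any proper colouring, since its three colours are pairwise distinct. For the 4-face $F = abde$, properness gives $c(d) \neq c(e)$; assume without loss of generality $c(e) > c(d)$. The only remaining way to tie the maximum of $F$ is then $c(b) = c(e)$ with $c(b) \geq c(a)$ (the symmetric case $c(a) = c(d)$ with $c(a) \geq c(e)$ fails because $c(d) < c(e)$). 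Forbidding $c(b) = c(e)$ leaves $B' := B \setminus \{c(e)\}$ with $|B'| \geq 1$. If $|B'| \geq 2$, pick any $c(a) \in A$ and then $c(b) \in B' \setminus \{c(a)\}$; otherwise, fix $c(b)$ to be the unique element of $B'$ and pick $c(a) \in A \setminus \{c(b)\}$, which has $\geq 1$ option. Either way $c(a) \neq c(b)$ and all face constraints hold, producing a capital $L$-colouring of $G$, contradicting the assumption that $G$ is a counter-example.

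The main obstacle I expect is the reduction step: verifying that $G'$ is a valid simple plane graph with the claimed face structure when some of the short-cut pairs happen to be already adjacent in $G$, so that the extremality hypothesis applies cleanly. Once this technical verification is completed in each sub-case, the extension argument is a short colour-counting check.
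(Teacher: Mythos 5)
Your proposal is correct and is essentially the paper's own proof: the same reduction (delete the two 4-vertices, add the four shortcut edges so that the triangle and the quadrilateral merge into a central 5-face, apply extremality to the smaller graph) followed by the same extension step (order $c(d),c(e)$ without loss of generality and additionally forbid the larger of the two on the non-adjacent 4-vertex, which is exactly how the paper kills the only possible tie on the 4-face). The technical obstacle you flag is actually moot under the paper's conventions: plane graphs there are allowed to have parallel edges, and the extremal ordering puts the number of vertices first, so the reduced graph is a strictly smaller instance even if some shortcut edges duplicate existing adjacencies.
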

 \begin{proof}
 Assume Proposition \ref{my5} is false and there exists a extremal counter-example $G$ with such a configuration. Let the 3-face be $v_1v_2v_3$ and the 4-face $v_1v_2v_5v_4$ with $v_1$ and $v_2$ being 4-vertices. Let $v_7$ be the remaining vertex connected to $v_1$, $f_1$ the face bounded partially by $v_3v_1v_7$ and $f_2$ the face partially bounded by $v_7v_1v_4$. Let $v_6$ be the remaining vertex connected to $v_2$, $f_3$ the face partially bounded by $v_5v_2v_6$ and $f_4$ the face partially bounded by $v_6v_2v_3$. See Figure 6.
 
 \begin{center}
\begin{tikzpicture}[scale = 0.8]
\node at (-1,0) {$f_1$};
\node at (0.5,1.5) {$f_2$};
\node at (2.5,-2) {$f_3$};
\node at (0,-3) {$f_4$};
\node (G-1) at (0,0) [circle,fill=black] [label=right:$v_1$] {};
\node (G-2) at (-1,-2) [circle,fill=black] [label=left:$v_3$] {};
\node (G-3) at (1,-2) [circle,fill=black] [label=right:$v_2$] {};
\node (G-4) at (1,1) [circle,fill=black] [label=right:$v_4$] {};
\node (G-5) at (2,-1) [circle,fill=black] [label=right:$v_5$] {};
\node (G-6) at (2,-3) [circle,fill=black] [label=right:$v_6$] {};
\node (G-7) at (0,1) [circle,fill=black] [label=left:$v_7$] {};
\draw[-][black] (G-1) edge (G-2);
\draw[-][black] (G-1) edge (G-3);
\draw[-][black] (G-2) edge (G-3);
\draw[-][black] (G-1) edge (G-4);
\draw[-][black] (G-1) edge (G-7);
\draw[-][black] (G-3) edge (G-5);
\draw[-][black] (G-3) edge (G-6);
\draw[-][black] (G-5) edge (G-4);
\end{tikzpicture}
\hspace{0.5 in}
\begin{tikzpicture}[scale = 0.8]
\node at (-1,0) {$f_1'$};
\node at (0.5,1.5) {$f_2'$};
\node at (2.5,-2) {$f_3'$};
\node at (0,-3) {$f_4'$};
\node (G-2) at (-1,-2) [circle,fill=black] [label=left:$v_3$] {};
\node (G-4) at (1,1) [circle,fill=black] [label=right:$v_4$] {};
\node (G-5) at (2,-1) [circle,fill=black] [label=right:$v_5$] {};
\node (G-6) at (2,-3) [circle,fill=black] [label=right:$v_6$] {};
\node (G-7) at (0,1) [circle,fill=black] [label=left:$v_7$] {};
\draw[-][black] (G-7) edge (G-2);
\draw[-][black] (G-2) edge (G-6);
\draw[-][black] (G-6) edge (G-5);
\draw[-][black] (G-4) edge (G-7);
\draw[-][black] (G-5) edge (G-4);
\end{tikzpicture}\\
Figure 6: Configuration of Proposition \ref{my5} and reduction.
\end{center}

Let $H$ be the graph $G \backslash \{v_1, v_2\}$ but with the additional edges $v_4v_7$, $v_7v_3$, $v_3v_6$ and $v_6v_5$. Let $f_1'$ be the new face partially bounded by $v_3v_7$, $f_2'$ by $v_7v_4$, $f_3'$ by $v_5v_6$ and $f_4'$ by $v_6v_3$. Then by the extremity of $G$, $H$ has an $L$-colouring $c$. As $v_4$ and $v_5$ are adjacent by symmetry we can assume $c(v_4) > c(v_5)$. Colour $v_2$ from $L(v_2)$ by a colour different from $c(v_3), c(v_4), c(v_5), c(v_6), c(f_3')$ and $c(f_4')$; call this colour $c(v_2)$. Then colour $v_1$ from $L(v_1)$ by a colour different from $c(v_2), c(v_3), c(v_4), c(v_7), c(f_1')$ and $c(f_2')$. The resulting colouring is a capital $L$-colouring. Indeed, the maximal colour on the face $v_1v_2v_5v_4$ is the colour of either $v_1$, $v_2$ or $v_4$. Therefore a capital $L$-colouring of $G$ exists contradicting that $G$ is a extremal counter-example.
\end{proof}

The proofs of Proposition \ref{my6} and \ref{my7} use the same idea as in the proof of Proposition \ref{my5}.

\begin{prop}
 In a extremal counter-example $G$, no 3-face and 4-face can share an edge joining a 4-vertex and a 5-vertex incident to three 3-faces. \label{my6}
 \end{prop}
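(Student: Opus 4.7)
My plan is to mirror the reduction of Proposition~\ref{my5}, with one additional edge to accommodate the larger degree of $v_2$. Label the 3-face $v_1v_2v_3$ and the 4-face $Q = v_1v_2v_5v_4$, with $v_1$ the 4-vertex and $v_2$ the 5-vertex. Let $v_7$ be the fourth neighbour of $v_1$ and let $f_1, f_2$ be the two other faces at $v_1$ (bounded partially by $v_3 v_1 v_7$ and $v_7 v_1 v_4$); by Lemma~\ref{my4}(5), neither is a 3-face. Write the cyclic order of neighbours of $v_2$ as $v_1, v_3, v_a, v_b, v_5$ and let $F_1, F_2, F_3$ be the three faces at $v_2$ between the consecutive pairs $(v_3, v_a), (v_a, v_b), (v_b, v_5)$. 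Since $v_2$ is incident to three 3-faces and $v_1v_2v_3$ accounts for one, exactly two of $F_1, F_2, F_3$ are 3-faces; denote the remaining non-3-face by $f_3$ and its two $v_2$-neighbours by $w_1, w_2$.

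I would then form $H$ by deleting $v_1$ and $v_2$ from $G$ and adding the three edges $v_3 v_7$, $v_7 v_4$, and $w_1 w_2$. Together with the edge $v_4 v_5$ of $G$ and the two existing edges among $\{v_3 v_a, v_a v_b, v_b v_5\}$ that correspond to the two 3-faces in $\{F_1, F_2, F_3\}$, these form a hexagonal face of $H$ at the former location of $v_1, v_2$ with boundary $v_3\,v_7\,v_4\,v_5\,v_b\,v_a$; the three added edges simultaneously create faces $f_1', f_2', f_3'$ of $H$ playing the role of $f_1, f_2, f_3$ in the reduction. Since $H$ has strictly fewer vertices than $G$, extremity furnishes a capital $L$-colouring $c$ of $H$. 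After swapping the labels $v_4, v_5$ if necessary, assume $c(v_4) > c(v_5)$.

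The extension copies Proposition~\ref{my5}: colour $v_2$ from $L(v_2)$ by a colour avoiding the six colours $c(v_3), c(v_a), c(v_b), c(v_5), c(v_4), c(f_3')$, and then colour $v_1$ from $L(v_1)$ avoiding $c(v_2), c(v_3), c(v_4), c(v_7), c(f_1'), c(f_2')$. Both choices exist since each list has size at least seven, and properness follows from the first four entries in each forbidden set. The 3-faces of $G$ (including the two surviving 3-faces among $F_1, F_2, F_3$) have unique maxima automatically from properness; the faces $f_1, f_2, f_3$ of $G$ (which extend $f_1', f_2', f_3'$ by one vertex) have unique maxima because $c(v_1), c(v_2)$ avoid $c(f_1'), c(f_2'), c(f_3')$. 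On the 4-face $Q$, the trick of Proposition~\ref{my5} applies: since $c(v_4) > c(v_5)$ and $c(v_1), c(v_2), c(v_4)$ are pairwise distinct (by properness and the deliberate avoidance of $c(v_4)$ when colouring $v_1, v_2$), the maximum on $Q$ is unique and achieved at one of $v_1, v_2, v_4$.

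The main obstacle I foresee is the positional case analysis for $f_3 \in \{F_1, F_2, F_3\}$, which determines which edge plays the role of $w_1 w_2$ and which existing edges complete the hexagon, together with the verification that the construction makes sense when one of the new edges $v_3 v_7, v_7 v_4, w_1 w_2$ happens to already be present in $G$. Happily, as in Proposition~\ref{my5} this is benign: any parallel edges or 2-faces introduced in $H$ are harmless because extremity of $G$ is invoked solely through the strict decrease in vertex count.
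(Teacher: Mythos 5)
Your reduction (delete the 4-vertex and the 5-vertex, add the three edges closing off the non-triangular faces around them, and invoke extremity via the strict drop in vertex count) is the same as the paper's, and the bookkeeping of six forbidden colours per list is right. The gap is the sentence ``After swapping the labels $v_4, v_5$ if necessary, assume $c(v_4) > c(v_5)$.'' In Proposition~\ref{my5} that swap is legitimate because both deleted vertices are 4-vertices and the configuration is symmetric; here it is not: $v_4$ is adjacent to the 4-vertex $v_1$ while $v_5$ is adjacent to the 5-vertex $v_2$, and exchanging the labels would change every forbidden set in your extension. So the case $c(v_5) > c(v_4)$ must be treated on its own, and there your colouring breaks: $v_2$ avoids $c(v_5)$ because it is adjacent to $v_5$, but $v_1$ does not --- $c(v_5)$ is not among the six colours you forbid for $v_1$, and there is no room to add it since $v_1$ already carries six constraints. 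Nothing then prevents $c(v_1) = c(v_5)$, in which case the maximum on $Q = v_1v_2v_5v_4$ can be attained at both $v_1$ and $v_5$.

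The repair is exactly the paper's two-case analysis, which adapts the \emph{order} of colouring to whichever of $c(v_4), c(v_5)$ is larger. When $c(v_4) > c(v_5)$, proceed as you did: colour $v_2$ first avoiding $c(v_3), c(v_a), c(v_b), c(v_5), c(f_3'), c(v_4)$, then $v_1$. When $c(v_5) > c(v_4)$, reverse the order: colour $v_1$ first with forbidden set $c(v_3), c(v_4), c(v_7), c(f_1'), c(f_2'), c(v_5)$ --- dropping $c(v_2)$, which is not yet assigned, frees the slot needed for $c(v_5)$ --- and then colour $v_2$ avoiding $c(v_1), c(v_3), c(v_a), c(v_b), c(v_5), c(f_3')$. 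In each case both deleted vertices avoid the larger of $c(v_4), c(v_5)$, so the maximum on $Q$ is attained uniquely at one of $v_1$, $v_2$, or that larger outer vertex. With this change your argument coincides with the paper's proof.
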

\begin{proof}
Let $v_1$ be a 5-vertex and let its neighbours be $v_2$, $v_3$, $v_4$, $v_5$ and $v_6$ in the cyclic order with $v_1v_2v_3$ being a 3-face. Let $v_1v_2v_7v_6$ be a 4-face and $v_2$ a 4-vertex with $v_8$ being the neighbour of $v_2$ different from $v_1$, $v_3$ and $v_7$. Let $f_1$ be the face partially bounded by $v_7v_2v_8$ and $f_2$ by $v_8v_2v_3$. Let $f_3$ be the second $\geq 4$-face adjacent to $v_1$. Let vertices $v_k$ and $v_{k+1}$ be neighbours of $v_1$ that are on face $f_3$. One of these configurations is depicted in Figure 7.
\begin{center}
\begin{tikzpicture}[scale = 0.5]
\node at (-3,2) {$f_1$};
\node at (0,3) {$f_2$};
\node at (2,0) {$f_3$};
\node (G-1) at (0,0) [circle,fill=black] [label=right:$v_1$] {};
\node (G-2) at (-1,2) [circle,fill=black] [label=left:$v_2$] {};
\node (G-3) at (1,2) [circle,fill=black] [label=right:$v_3$] {};
\node (G-4) at (1,-2) [circle,fill=black] [label=right:$v_4$] {};
\node (G-5) at (0,-2) [circle,fill=black] [label=below:$v_5$] {};
\node (G-6) at (-1,-2) [circle,fill=black] [label=left:$v_6$] {};
\node (G-7) at (-3,0) [circle,fill=black] [label=left:$v_7$] {};
\node (G-8) at (-2,3) [circle,fill=black] [label=left:$v_8$] {};
\draw[-][black] (G-1) edge (G-2);
\draw[-][black] (G-1) edge (G-3);
\draw[-][black] (G-1) edge (G-4);
\draw[-][black] (G-1) edge (G-5);
\draw[-][black] (G-1) edge (G-6);
\draw[-][black] (G-2) edge (G-3);
\draw[-][black] (G-2) edge (G-7);
\draw[-][black] (G-2) edge (G-8);
\draw[-][black] (G-4) edge (G-5);
\draw[-][black] (G-5) edge (G-6);
\draw[-][black] (G-6) edge (G-7);
\end{tikzpicture}
\hspace{0.5 in}
\begin{tikzpicture}[scale = 0.5]
\node at (-3,2) {$f_1'$};
\node at (0,3) {$f_2'$};
\node at (2,0) {$f_3'$};
\node (G-3) at (1,2) [circle,fill=black] [label=right:$v_3$] {};
\node (G-4) at (1,-2) [circle,fill=black] [label=right:$v_4$] {};
\node (G-5) at (0,-2) [circle,fill=black] [label=below:$v_5$] {};
\node (G-6) at (-1,-2) [circle,fill=black] [label=left:$v_6$] {};
\node (G-7) at (-3,0) [circle,fill=black] [label=left:$v_7$] {};
\node (G-8) at (-2,3) [circle,fill=black] [label=left:$v_8$] {};
\draw[-][black] (G-7) edge (G-8);
\draw[-][black] (G-8) edge (G-3);
\draw[-][black] (G-3) edge (G-4);
\draw[-][black] (G-4) edge (G-5);
\draw[-][black] (G-5) edge (G-6);
\draw[-][black] (G-6) edge (G-7);
\end{tikzpicture}\\
Figure 7: Configuration of Proposition \ref{my6} and reduction.
\end{center}

Let $H$ be the graph $G \backslash \{v_1, v_2\}$ with edges $v_6v_8$, $v_3v_8$ and $v_kv_{k+1}$ added. Let $f_1'$ be the new face in $H$ partially bounded by $v_7v_8$, $f_2'$ by $v_3v_8$ and $f_3'$ by $v_kv_{k+1}$. By the extremity of $G$ there is a $L$-colouring $c$ of $H$. As $v_6$ and $v_7$ are adjacent, they have different colours examine two cases: 

\paragraph{Case $c(v_6) > c(v_7)$.} Colour $v_2$ by a colour from $L(v_2)$ different from $c(v_3)$, $c(v_6)$, $c(v_7)$, $c(v_8)$, $c(f_1')$ and $c(f_2')$ call it $c(v_2)$. Colour $v_1$ a colour from $L(v_1)$ different from $c(v_2)$, $c(v_3)$, $c(v_4)$, $c(v_5)$, $c(v_6)$ and $c(f_3')$. This is a capital $L$-colouring as on the 4-face $v_1v_2v_7v_8$ the maximal colour is that of either $v_1$, $v_2$ or $v_6$.

\paragraph{Case $c(v_7) > c(v_6)$.} Colour $v_1$ by a colour from $L(v_1)$ different from $c(v_3)$, $c(v_4)$, $c(v_5)$, $c(v_6)$, $c(v_7)$ and $c(f_3')$ call it $c(v_1)$. Colour $v_2$ a colour from $L(v_2)$ different from $c(v_1)$, $c(v_3)$, $c(v_7)$, $c(v_8)$, $c(f_1')$ and $c(f_2')$. This is a capital $L$-colouring as on the 4-face $v_1v_2v_7v_8$ the maximal colour is that of either $v_1$, $v_2$ or $v_7$.\\

Therefore $G$ has an $L$-colouring.
\end{proof}

\begin{prop}
 In a extremal counter example $G$, no 3-face and 4-face can share an edge joining a 4-vertex and a 6-vertex incident with five 3-faces. \label{my7}
\end{prop}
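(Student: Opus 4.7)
The plan is to emulate the proofs of Propositions \ref{my5} and \ref{my6}: delete $v_1$ (the 6-vertex) and $v_2$ (the 4-vertex) from $G$ together with a small modification in the resulting hole, invoke the extremity hypothesis to get a capital $L$-colouring of the smaller graph, and then recolour $v_1$ and $v_2$ by a case split. Label the neighbours of $v_1$ in cyclic order as $v_2, v_3, v_4, v_5, v_6, v_7$, with the five 3-faces at $v_1$ being $v_1v_iv_{i+1}$ for $i=2,\ldots,6$ and the shared 4-face being $v_1v_2v_8v_7$ (so $v_8$ is adjacent to both $v_2$ and $v_7$). Let $v_9$ be the fourth neighbour of $v_2$ apart from $v_1, v_3, v_8$, and let $f_1, f_2$ be the faces at $v_2$ partially bounded by $v_3v_2v_9$ and $v_9v_2v_8$ respectively. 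Form $H$ from $G$ by deleting $v_1$ and $v_2$ and adding the two edges $v_3v_9, v_8v_9$; then the new faces of $H$ are $f_1', f_2'$ replacing $f_1, f_2$ (with the $v_2$-corner cut off by the new edge) and $f_3'$ bounded by the 7-cycle $v_3v_4v_5v_6v_7v_8v_9$. By extremity of $G$ (strictly fewer vertices), $H$ has a capital $L$-colouring $c$.

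Since $v_7v_8$ is an edge of $G$, we have $c(v_7) \neq c(v_8)$, and I would split into two symmetric cases. If $c(v_8) > c(v_7)$, first colour $v_1$ from $L(v_1)$ avoiding the six values $c(v_3), c(v_4), c(v_5), c(v_6), c(v_7), c(v_8)$, and then colour $v_2$ from $L(v_2)$ avoiding $c(v_1), c(v_3), c(v_9), c(v_8), c(f_1'), c(f_2')$. If $c(v_7) > c(v_8)$, reverse the order: colour $v_2$ first avoiding $c(v_3), c(v_9), c(v_8), c(v_7), c(f_1'), c(f_2')$, then colour $v_1$ avoiding $c(v_2), c(v_3), c(v_4), c(v_5), c(v_6), c(v_7)$. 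Each step forbids at most six colours from a list of size at least seven, so a valid choice always exists.

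To verify that the resulting colouring is a capital $L$-colouring, properness is immediate from the forbidden lists, and faces disjoint from $\{v_1, v_2\}$ inherit their unique maxima directly from $c$. The five 3-faces at $v_1$ have three pairwise adjacent vertices and so have a unique maximum automatically. For $f_1$ (and analogously $f_2$), the constraint $c(v_2) \neq c(f_1')$ ensures that inserting $v_2$ into the face boundary either leaves the old unique maximum in place or promotes $v_2$ to the new unique maximum. For the 4-face $v_1v_2v_8v_7$, properness rules out adjacent ties; the only non-adjacent pairs are $\{v_1, v_8\}$ and $\{v_2, v_7\}$. In the case $c(v_8) > c(v_7)$, a tie $c(v_2) = c(v_7)$ attaining the maximum would force $c(v_2) \geq c(v_8) > c(v_7)$, a contradiction, while the tie $c(v_1) = c(v_8)$ is explicitly forbidden; the other case is symmetric.

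The main obstacle is that the constraint budget is essentially tight. Because $v_1$ has degree six, proper colouring alone forbids six colours, and any additional 4-face uniqueness constraint would exhaust the list of size seven. The case split on $c(v_7)$ versus $c(v_8)$, in the spirit of Proposition \ref{my6}, is exactly what lets us shunt the extra 4-face constraint onto whichever of $v_1, v_2$ is coloured second in that case, keeping both vertices at six constraints against a list of size seven.
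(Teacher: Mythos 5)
Your proposal is correct and follows essentially the same route as the paper: delete $v_1$ and $v_2$, add the edges $v_3v_9$ and $v_8v_9$, apply extremity to the smaller graph, and then recolour $v_1$ and $v_2$ in an order dictated by whether $c(v_7)$ or $c(v_8)$ is larger, with the same six forbidden values at each step. Your verification of the $4$-face $v_1v_2v_8v_7$ is in fact slightly more explicit than the paper's.
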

\begin{proof}
Let $v_1$ be a 6-vertex and let its neighbours be $v_2$, $v_3$, $v_4$, $v_5$, $v_6$ and $v_7$ in the cyclic order. Let $v_1v_2v_3$, $v_1v_3v_4$, $v_1v_4v_5$, $v_1v_5v_6$, $v_1v_6v_7$ be 3-faces. Let $v_1v_2v_8v_7$ be a 4-face and $v_2$ is a 4-vertex. The remaining neighbour of $v_2$ is $v_9$. Let $f_1$ be the face partially bounded by $v_8v_2v_9$ and $f_2$ by $v_3v_2v_9$. This is demonstrated in figure 8.
\begin{center}
\begin{tikzpicture}[scale = 0.5]
\node at (-3,2) {$f_1$};
\node at (0,3) {$f_2$};
\node (G-1) at (0,0) [circle,fill=black] [label=left:$v_1$] {};
\node (G-2) at (-1,2) [circle,fill=black] [label=left:$v_2$] {};
\node (G-3) at (1,2) [circle,fill=black] [label=right:$v_3$] {};
\node (G-4) at (2,0) [circle,fill=black] [label=right:$v_4$] {};
\node (G-5) at (1,-2) [circle,fill=black] [label=right:$v_5$] {};
\node (G-6) at (0,-2) [circle,fill=black] [label=below:$v_6$] {};
\node (G-7) at (-1,-2) [circle,fill=black] [label=left:$v_7$] {};
\node (G-8) at (-3,0) [circle,fill=black] [label=left:$v_8$] {};
\node (G-9) at (-2,3) [circle,fill=black] [label=left:$v_9$] {};
\draw[-][black] (G-1) edge (G-2);
\draw[-][black] (G-1) edge (G-3);
\draw[-][black] (G-1) edge (G-4);
\draw[-][black] (G-1) edge (G-5);
\draw[-][black] (G-1) edge (G-6);
\draw[-][black] (G-1) edge (G-7);
\draw[-][black] (G-2) edge (G-3);
\draw[-][black] (G-2) edge (G-8);
\draw[-][black] (G-2) edge (G-9);
\draw[-][black] (G-3) edge (G-4);
\draw[-][black] (G-4) edge (G-5);
\draw[-][black] (G-5) edge (G-6);
\draw[-][black] (G-6) edge (G-7);
\draw[-][black] (G-7) edge (G-8);
\end{tikzpicture}
\hspace{0.5 in}
\begin{tikzpicture}[scale = 0.5]
\node at (-3,2) {$f_1'$};
\node at (0,3) {$f_2'$};
\node (G-3) at (1,2) [circle,fill=black] [label=right:$v_3$] {};
\node (G-4) at (2,0) [circle,fill=black] [label=right:$v_4$] {};
\node (G-5) at (1,-2) [circle,fill=black] [label=right:$v_5$] {};
\node (G-6) at (0,-2) [circle,fill=black] [label=below:$v_6$] {};
\node (G-7) at (-1,-2) [circle,fill=black] [label=left:$v_7$] {};
\node (G-8) at (-3,0) [circle,fill=black] [label=left:$v_8$] {};
\node (G-9) at (-2,3) [circle,fill=black] [label=left:$v_9$] {};
\draw[-][black] (G-8) edge (G-9);
\draw[-][black] (G-9) edge (G-3);
\draw[-][black] (G-3) edge (G-4);
\draw[-][black] (G-4) edge (G-5);
\draw[-][black] (G-5) edge (G-6);
\draw[-][black] (G-6) edge (G-7);
\draw[-][black] (G-7) edge (G-8);
\end{tikzpicture}\\
Figure 8: Configuration of Proposition \ref{my7} and reduction.
\end{center}

Let $H$ be the induced graph on $G \backslash \{v_1,v_2\}$ but with the added edges $v_8v_9$ and $v_3v_9$. Then by the extremity of $G$, $H$ has an $L$-colouring, $c$. Then as $v_7$ and $v_8$ are adjacent, they have different colours. We consider two cases:

\paragraph{Case $c(v_8) > c(v_7)$.} Colour $v_1$ by a colour from $L(v_1)$ different from $c(v_3)$, $c(v_4)$, $c(v_5)$, $c(v_6)$, $c(v_7)$ and $c(v_8)$, call it $c(v_1)$. Colour $v_2$ a colour from $L(v_2)$ different from $c(v_1)$, $c(v_3)$, $c(v_8)$, $c(v_9)$, $c(f_1')$ and $c(f_2')$. This is a capital $L$-colouring of $G$ as in the 4-face $v_1v_2v_8v_7$ the maximal colour is that of either $v_1$, $v_2$ or $v_8$.

\paragraph{Case $c(v_7) > c(v_8)$.} Colour $v_2$ by a colour from $L(v_2)$ different from $c(v_3)$, $c(v_7)$, $c(v_8)$, $c(v_9)$, $c(f_1')$ and $c(f_2')$ and call it $c(v_2)$. Then colour $v_1$ a colour from $L(v_1)$ different from $c(v_2)$, $c(v_3)$, $c(v_4)$, $c(v_5)$, $c(v_6)$ and $c(v_7)$. This is a capital $L$-colouring of $G$ as in the 4-face $v_1v_2v_8v_7$ the maximal colour is that of either $v_1$, $v_2$ or $v_7$.\\

So there is a $L$-colouring of $G$.
\end{proof}

\subsection{Discharging}

The existence of an extremal counter-example will be disproved by the discharging method. The initial charge of each $d(f)$-face $f \in F(G)$ is $d(f)-4$, and the initial charge of each $d(v)$-vertex $v \in V(G)$ is $d(v)-4$. By Euler's formula the total amount of charge is
\[
 \sum_{v \in V(G)} d(v) - 4 + \sum_{f \in F(G)} d(f) - 4 = 2 \vert E(G) \vert - 4 \vert V(G) \vert + 2 \vert E(G) \vert - 4 \vert F(G) \vert = -8.
\]
The initial charge is redistributed by the following rules.
\begin{itemize}
 \item[Rule V5]
 A 5-vertex incident with at most two 3-faces shall give each 3-face 1/2 units of charge. A 5-vertex incident with three 3-faces shall give each 3-face 1/3 units of charge.
 \item[Rule V6]
 A 6-vertex incident with at most four 3-faces shall give each 3-face 1/2 units of charge. A 6-vertex incident with five 3-faces shall give each 3-face 1/3 units of charge.
 \item[Rule V7]
 A 7-vertex incident with at most six 3-faces shall give each 3-face 1/2 units of charge. A 7-vertex incident with seven 3-faces shall give each 3-face 1/3 units of charge.
 \item[Rule V8]
 A $\geq 8$-vertex will give every 3-face 1/2 units of charge.
 \item[Rule E1]
 A $\geq 5$-face will give 1/2 units of charge to every 3-face adjacent via an edge joining two 4-vertices.
 \item[Rule E2]
 A $\geq 5$-face will give 1/6 units of charge to every 3-face adjacent via an edge joining a 4-vertex and a $\geq 5$-vertex.
\end{itemize}

Rules V5-8 do not allow a vertex to give out more charge than it started with, therefore any vertex after the application of the rules has non-negative charge. Also note that 4-faces are unaffected by the rules so they keep zero charge. 

\begin{lem}
Every $\geq 5$-face after the rules have been applied has non-negative charge. \label{my8}
\end{lem}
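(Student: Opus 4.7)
The plan is to bound the charge $f$ loses via Rules E1 and E2 by its initial charge $d-4$, where $d = d(f) \ge 5$. Only these two rules can decrease the charge of a $\ge 5$-face; let $e_1$ and $e_2$ denote the number of edges of $f$ on which Rule E1 and Rule E2 respectively apply. The final charge of $f$ is $d - 4 - e_1/2 - e_2/6$, so the task is to show $e_1/2 + e_2/6 \le d-4$.

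The main structural input is Lemma \ref{my4}(5). I use it to prove the incidence bound
\[
 2 e_1 + e_2 \le d.
\]
Any 4-vertex $u$ on $f$ lies on at most one 3-face, and if an $f$-edge at $u$ is adjacent to a 3-face then $u$ belongs to that 3-face; hence at most one of the two $f$-edges at $u$ can be charge-giving. Summing the number of charge-giving $f$-edges incident to each 4-vertex of $f$ counts each E1-edge twice (both endpoints are 4-vertices) and each E2-edge once (its unique 4-vertex endpoint), giving $2 e_1 + e_2$; this is bounded above by the number of 4-vertices on $f$, and hence by $d$.

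Combining the incidence bound with $e_1/2 + e_2/6 \le (2 e_1 + e_2)/4 \le d/4$ and observing that $d/4 \le d - 4$ precisely when $d \ge 16/3$, the lemma follows immediately for every $d \ge 6$.

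The main obstacle is $d = 5$, where $d/4 = 5/4$ overshoots $d-4 = 1$, so I close this case by a short analysis on $e_1$. If $e_1 = 0$, then the loss is at most $5/6$. If $e_1 = 1$, the incidence bound yields $e_2 \le 3$, and the loss is at most $1/2 + 3/6 = 1$. The case $e_1 \ge 3$ is impossible: two E1-edges cannot share a vertex (otherwise that vertex would be a 4-vertex lying on two distinct 3-faces, contradicting Lemma \ref{my4}(5)), and a 5-cycle admits at most two pairwise vertex-disjoint edges. The remaining case $e_1 = 2$ places the two E1-edges at non-adjacent positions of the bounding 5-cycle, so their four endpoints are distinct 4-vertices, each already on the 3-face attached to its E1-edge. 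Each of the three other edges of $f$ is incident to at least one of these four 4-vertices; for such an edge to be charge-giving would put the shared 4-vertex on a second 3-face, violating Lemma \ref{my4}(5). Hence $e_2 = 0$ and the loss is exactly $1 = d - 4$, completing the proof.
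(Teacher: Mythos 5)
Your proof is correct and follows essentially the same route as the paper: both arguments use Lemma \ref{my4} Part 5 to show that charge-giving edges of a $\geq 5$-face cannot cluster at a 4-vertex, deduce that the total outflow is at most $d/4$ (which settles $d\geq 6$), and then handle $d=5$ by a separate case analysis on the presence of E1-edges. The only difference is bookkeeping --- you count incidences of charge-giving edges at 4-vertices to get $2e_1+e_2\leq d$, while the paper bounds the charge sent through each pair of consecutive boundary edges by $1/2$ --- and your $d=5$ analysis is, if anything, slightly more complete in explicitly disposing of the $e_1=2$ case.
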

\begin{proof}
Consider a $\geq 5$-face, let $v_1$, $v_2$ and $v_3$ be 3 consecutive vertices on its boundary. If Rule E1 applies to the edge $v_1v_2$, then the other face partially bounded by $v_2v_3$ is a $\geq 4$-face by Lemma \ref{my4} Part 5. If the edge $v_1v_2$ uses Rule $E2$ with $v_2$ being the 4-vertex, then the other face containing $v_2v_3$ is a $\geq 4$-face. If the edge $v_1v_2$ uses Rule $E2$ with $v_1$ being the 4-vertex, then Rule E2 can apply with respect to $v_2v_3$. Therefore the $\geq 5$-face sends out through any two consecutive edges at most 1/2 units of charge. Therefore, $\geq 6$-faces have non-negative charge after discharging.\\

Let $f$ be a 5-face $v_1v_2v_3v_4v_5$. Note that the initial charge of $f$ is 1. If no edge on the boundary of $f$ uses Rule E1, then it gives out at most 5/6 units of charge. Suppose the $v_1v_2$ uses Rule E1, then $v_1$ and $v_2$ are 4-vertices. By Lemma \ref{my4} Part 5 the other faces containing $v_2v_3$ and $v_1v_5$ are $\geq 4$-faces, so no rule applies with respect to them. Since at most 1/2 units of charge is sent through $v_3v_4$ and $v_4v_5$ by the argument in the above paragraph, 5-faces have non-negative charge after the rules are applied.
\end{proof} 

It remains to consider 3-faces.

\begin{lem}
 Every 3-face after discharging has non-negitive charge. \label{my9}
\end{lem}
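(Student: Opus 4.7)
The plan is to show that a $3$-face $T$, which carries initial charge $-1$, accumulates at least $1$ unit of charge. Charge can reach $T$ only through its three vertices (via Rules V5--V8) and through the faces opposite its three edges (via Rules E1--E2, which fire only from $\geq 5$-faces). I would split the argument on the number of $4$-vertices on the boundary of $T$; the crucial uniform bound to keep in mind is that Rules V5--V8 guarantee every $\geq 5$-vertex contributes at least $1/3$ to each incident $3$-face, and $\geq 8$-vertices always contribute $1/2$.

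The technical core is a bundle claim: whenever $u$ is a $4$-vertex of $T$, $v$ a $\geq 5$-vertex of $T$ with $uv$ an edge of $T$, and $f'$ the face across $uv$ from $T$, the combined charge that $v$ and $f'$ send to $T$ is at least $1/2$. Lemma \ref{my4} Part 5 forbids $f'$ from being a $3$-face. If $f'$ is a $\geq 5$-face, Rule E2 gives $1/6$ while $v$ gives at least $1/3$, totalling $1/2$. If $f'$ is a $4$-face, then $v$ alone must supply $1/2$: this is exactly what Proposition \ref{my6} ensures when $d(v)=5$ and Proposition \ref{my7} ensures when $d(v)=6$; when $d(v)=7$ the mere presence of the $4$-face $f'$ at $v$ already rules out the seven-$3$-face branch of Rule V7, forcing the $1/2$ branch; and when $d(v)\geq 8$ Rule V8 gives $1/2$ unconditionally.

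With the bundle claim in hand, the casework closes quickly. If $T$ has no $4$-vertex, each of its three vertices alone contributes at least $1/3$, totalling $\geq 1$. If $T$ has exactly one $4$-vertex $u$, the two disjoint bundles associated with the edges $uv$ and $uw$ each contribute $\geq 1/2$ by the claim. If $T$ has exactly two $4$-vertices $u,v$, then Proposition \ref{my5} together with Lemma \ref{my4} Part 5 forces the face across $uv$ to be a $\geq 5$-face, so Rule E1 contributes $1/2$; a single application of the bundle claim at the remaining $\geq 5$-vertex $w$ supplies another $1/2$. If all three vertices of $T$ are $4$-vertices, the same argument forces each face across an edge of $T$ to be a $\geq 5$-face, Rule E1 fires on all three edges, and the total is $3/2$.

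The main obstacle is the $4$-face subcase of the bundle claim: this is exactly the configuration in which, without some reducible structure, a low-degree $\geq 5$-vertex $v$ would sit in the cheap branch of Rule V5 or V6 and send only $1/3$, making the bundle fall short of $1/2$. Propositions \ref{my5}--\ref{my7} are engineered precisely to forbid these configurations, and the bundle claim really would fail without them; for $d(v)\geq 7$ the constraints already built into Rules V7 and V8 suffice on their own, which is why the preceding reducible configuration lemmas only need to go as far as degree $6$.
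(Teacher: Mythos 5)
Your proposal is correct and follows essentially the same route as the paper: the same case split on the number of $4$-vertices of the $3$-face, with your ``bundle claim'' being precisely the paper's observation that each $\geq 5$-vertex together with the face across its edge to a $4$-vertex contributes at least $1/2$, using Propositions \ref{my5}--\ref{my7} and Lemma \ref{my4} Part 5 in the same way. The only cosmetic difference is that in the two-$4$-vertex case you invoke the bundle at just one of the two edges meeting the $\geq 5$-vertex, whereas the paper counts both side faces; both yield the required total of $1$.
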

\begin{proof}
We distinguish cases based on how many 4-vertices are incident to a 3-face $f$. Recall that the initial charge of $f$ is -1.
 
\paragraph{Three 4-vertices} By Proposition \ref{my5} each edge of the 3-face is incident to a $\geq 5$-face. Therefore by Rule E1 the face $f$ receives 1/2 unit of charge from each of these three $\geq 5$-faces. So its final charge after discharging is 1/2.

\paragraph{Two 4-vertices} From Proposition \ref{my5} the edge of the 3-face that is joining the two 4-vertices is also contained in $\geq 5$-face. By Lemma \ref{my4} Part 5, the other two faces incident with $f$ are $\geq 4$-faces. Then there are three possibilities regarding the remaining vertex which we call $v$: $v$ is either $\geq 6$-vertex, a $5$-vertex incident with two or less 3-faces, or a 5-vertex incident with three 3-faces. These cases are represented in figure 9.

\begin{center}
\begin{tikzpicture}[scale = 1]
\node at (-5,-1) [circle,fill=black] [label=right:$\geq 5$-vertex] {};
\node at (-5,-2) [diamond,fill=black] [label=right:$4$-vertex] {};
\node (L-1) at (0,-2.5) {$\geq 5$-face};
\node (L-2) at (-1.5,-1) {$\geq 4$-face};
\node (L-3) at (1.5,-1) {$\geq 4$-face};
\node (G-1) at (0,0) [circle,fill=black] [label=right:$\geq 6$-vertex] {};
\node (G-2) at (-1,-2) [diamond,fill=black] {};
\node (G-3) at (1,-2) [diamond,fill=black] {};
\node (G-4) at (1,1) {};
\node (G-5) at (-1,1) {};
\node (G-6) at (-2,-2) {};
\node (G-7) at (-1,-3) {};
\node (G-8) at (2,-2) {};
\node (G-9) at (1,-3) {};
\node (G-10) at (0,-1) {};
\node (G-11) at (0,-1.5) {};
\draw[->][black] (G-1) edge (G-10);
\draw[->][black] (L-1) edge (G-11);
\draw[-][black] (G-1) edge (G-2);
\draw[-][black] (G-1) edge (G-3);
\draw[-][black] (G-1) edge (G-4);
\draw[-][black] (G-1) edge (G-5);
\draw[-][black] (G-2) edge (G-3);
\draw[-][black] (G-2) edge (G-6);
\draw[-][black] (G-2) edge (G-7);
\draw[-][black] (G-3) edge (G-8);
\draw[-][black] (G-3) edge (G-9);
\end{tikzpicture}
\hspace{0.5 in}
\begin{tikzpicture}[scale = 1]
\node at (0,-2.5) {$\geq 5$-face};
\node at (-1.5,-1) {$\geq 5$-face};
\node at (1.5,-1) {$\geq 5$-face};
\node (G-1) at (0,0) [circle,fill=black] [label=right:$5$-vertex with three 3-faces] {};
\node (G-2) at (-1,-2) [diamond,fill=black] {};
\node (G-3) at (1,-2) [diamond,fill=black] {};
\node (G-4) at (1,1) {};
\node (G-5) at (-1,1) {};
\node (G-6) at (-2,-2) {};
\node (G-7) at (-1,-3) {};
\node (G-8) at (2,-2) {};
\node (G-9) at (1,-3) {};
\node (G-10) at (0,-0.8) {};
\node (G-11) at (0,-1.5) {};
\node (G-12) at (-0.2,-1.3) {};
\node (G-13) at (0.2,-1.3) {};
\draw[->][black] (G-1) edge (G-10);
\draw[->][black] (L-1) edge (G-11);
\draw[->][black] (L-2) edge (G-12);
\draw[->][black] (L-3) edge (G-13);
\draw[-][black] (G-1) edge (G-2);
\draw[-][black] (G-1) edge (G-3);
\draw[-][black] (G-1) edge (G-4);
\draw[-][black] (G-1) edge (G-5);
\draw[-][black] (G-2) edge (G-3);
\draw[-][black] (G-2) edge (G-6);
\draw[-][black] (G-2) edge (G-7);
\draw[-][black] (G-3) edge (G-8);
\draw[-][black] (G-3) edge (G-9);
\end{tikzpicture}\\
Figure 9: Two 2-vertex cases.
\end{center}

\subparagraph{The vertex $v$ is a $\geq 6$-vertex.} Note that the number of 3-faces incident with $v$ is at most $d(v) - 2$. So by Rules V6-8, the face receives 1/2 unit of charge from $v$. It also receives 1/2 unit of charge from the $\geq 5$-face containing the other two vertices of $f$ by Rule E1. Therefore the face $f$ has non-negative charge after discharging.

\subparagraph{The vertex $v$ is a $5$-vertex incident with two or less 3-faces.} By Rule V5 the 3-face receives 1/2 unit of charge from $v$. The 3-face also receives 1/2 unit of charge from the $\geq 5$-face containing the other two vertices from Rule E1. Therefore after discharging the face $f$ has non-negative charge.

\subparagraph{The vertex $v$ is a 5-vertex incident with three 3-faces.} By Proposition \ref{my6} the faces incident with $v$ that share an edge with the face $f$ are $\geq 5$-faces. By Rule V5 the face $f$ receives 1/3 units of charge from $v$. The face $f$ receives 1/6 unit of charge from each of the two $\geq 5$-faces sharing an edge containing $v$ by Rule E2. Finally, the face $f$ receives 1/2 unit of charge from the $\geq 5$-face sharing the edge not containing $v$. Therefore after discharging the faces has $1/6$ units of charge.

\paragraph{A single 4-vertex.} Let $v$ be one of the two $\geq 5$-vertices. The edge between $v$ and the 4-vertex is contained in the 3-face $f$ and a $\geq 4$-face by Lemma \ref{my4} Part 5. Therefore the number of 3-faces incident to $v$ is at most $d(v) - 1$. Then one of five cases happens with respect to $v$: $v$ is $\geq 7$-vertex, $v$ is a 6-vertex incident with four or less 3-faces, $v$ is a 5-vertex incident with two or less 3-faces, $v$ is a 6-vertex incident with five 3-faces, or $v$ is a 5-vertex incident with three 3-faces. See figure 10 for clarification.

\begin{center}
\begin{tikzpicture}[scale = 1]
\node at (-3,-1) [circle,fill=black] [label=right:$\geq 5$-vertex] {};
\node at (-3,-2) [diamond,fill=black] [label=right:$4$-vertex] {};
\node (L-1) at (1.5,-1) {$\geq 4$-face};
\node (G-1) at (0,0) [diamond,fill=black] {};
\node (G-2) at (1,-2) [circle,fill=black] [label=below:$\geq 7$-vertex] {};
\node (G-3) at (-0.1,-2) {};
\node (G-4) at (1,1) {};
\node (G-5) at (2,-2) {};
\node (G-6) at (0,1) {};
\node (G-7) at (0,-3) {};
\node (G-8) at (0,-1.2) {};
\draw[-][black] (G-1) edge (G-2);
\draw[-][black] (G-1) edge (G-4);
\draw[-][black] (G-2) edge (G-3);
\draw[-][black] (G-2) edge (G-5);
\draw[->][black] (G-2) edge (G-8);
\draw[dotted][black] (G-6) edge (G-7);
\end{tikzpicture}
\hspace{0.3 in}
\begin{tikzpicture}[scale = 1]
\node (L-1) at (1.5,-1) {$\geq 5$-face};
\node (G-1) at (0,0) [diamond,fill=black] {};
\node (G-2) at (1,-2) [circle,fill=black] [label=below:$6$-vertex with five 3-faces] {};
\node (G-3) at (-0.1,-2) {};
\node (G-4) at (1,1) {};
\node (G-5) at (2,-2) {};
\node (G-6) at (0,1) {};
\node (G-7) at (0,-3) {};
\node (G-8) at (0.3,-1.5) {};
\node (G-9) at (0.2,-1.3) {};
\draw[-][black] (G-1) edge (G-2);
\draw[-][black] (G-1) edge (G-4);
\draw[-][black] (G-2) edge (G-3);
\draw[-][black] (G-2) edge (G-5);
\draw[->][black] (G-2) edge (G-8);
\draw[->][black] (L-1) edge (G-9);
\draw[dotted][black] (G-6) edge (G-7);
\end{tikzpicture}
\hspace{0.3 in}
\begin{tikzpicture}[scale = 1]
\node (L-1) at (1.5,-1) {$\geq 5$-face};
\node (G-1) at (0,0) [diamond,fill=black] {};
\node (G-2) at (1,-2) [circle,fill=black] [label=below:$5$-vertex with three 3-faces] {};
\node (G-3) at (-0.1,-2) {};
\node (G-4) at (1,1) {};
\node (G-5) at (2,-2) {};
\node (G-6) at (0,1) {};
\node (G-7) at (0,-3) {};
\node (G-8) at (0.3,-1.5) {};
\node (G-9) at (0.2,-1.3) {};
\draw[-][black] (G-1) edge (G-2);
\draw[-][black] (G-1) edge (G-4);
\draw[-][black] (G-2) edge (G-3);
\draw[-][black] (G-2) edge (G-5);
\draw[->][black] (G-2) edge (G-8);
\draw[->][black] (L-1) edge (G-9);
\draw[dotted][black] (G-6) edge (G-7);
\end{tikzpicture}\\
Figure 10: Three half cases.
\end{center}

\subparagraph{The vertex $v$ is $\geq 7$-vertex, $v$ is a 6-vertex incident with four or less 3-faces or $v$ is a 5-vertex with incident with two or less 3-faces.} In all these cases the face $f$ receives at least 1/2 units of charge from $v$ by Rule V5-8.

\subparagraph{The vertex $v$ is a 6-vertex incident with five 3-faces.} By Proposition \ref{my7} the edge between $v$ and the 4-vertex is contained in a $\geq 5$-face. By Rule E2, the 3-face receives 1/6 units of charge from the $\geq 5$-face. By Rule V6, the 3-face also receives 1/3 units of charge from $v$.

\subparagraph{The vertex $v$ is a 5-vertex incident with three 3-faces.} By Proposition \ref{my6} the edge between $v$ and the 4-vertex is adjacent to a $\geq 5$-face.  By Rule E2, the 3-face receives 1/6 units of charge from the $\geq 5$-face. By Rule V6, the 3-face receives 1/3 units of charge from $v$.\\

In each of the cases the face receives at least 1/2 unit of charge together from the vertex $v$ and the edge containing $v$ and the 4-vertex. So, the 3-face $f$ has non-negative charge after discharging.

\paragraph{No 4-vertex} By Rules V5-8 the face receives at least 1/3 unit of charge from each vertex it contains. Therefore the 3-face has non-negative charge after discharging.
\end{proof}

\begin{proof}[Proof of Theorem \ref{my3}]
 From discussion after discharging rules, Lemma \ref{my8}, and Lemma \ref{my9}, we have that every vertex and face after discharging has non-negative charge. This contradicts the amount of charge in the system. So, there is no extremal counter-example to Theorem \ref{my3}.
\end{proof}

\section{Conclusion}

In the case of ordinary colouring, it is known that every plane graph is 5-choosable \cite{CT94}. The bound is tight as shown by Voigt \cite{MV93}. We have tried to construct an example of a plane graph with lists of sizes five such that there is no capital colouring assigning each vertex a colour from its list. However, we have not managed to have done so, which led us to propose the following.
\begin{conj}
If each vertex of a plane graph is assigned a list of five integers,
then there exists a capital colouring assigning each vertex a colour from its list.
\end{conj}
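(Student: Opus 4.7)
The plan is to attempt to extend the two-stage strategy used in the proof of Theorem~\ref{my1} to the list setting. Recall that the proof first applies the strengthened Fabrici--G\"oring lemma (Lemma~\ref{my2}) to produce a red/blue/black labelling in which every face has exactly one ``capital candidate'' (the red vertex if present, otherwise the unique blue vertex) and in which the black-induced subgraph is triangle-free; Gr\"otzsch's theorem then properly $3$-colours the black subgraph with $\{1,2,3\}$, and blue and red are promoted to $4$ and $5$ respectively. In the list setting with $|L(v)| \ge 5$, the natural analog would replace Gr\"otzsch's theorem with Thomassen's result that every triangle-free planar graph is $3$-choosable.

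Concretely, I would try to designate, for each vertex $v$, a three-element ``black sublist'' $L_B(v) \subseteq L(v)$ of size $3$ to be used if $v$ is labelled black, with the following compatibility condition: on every face $f$ with capital candidate $w$, the remaining two-element ``reserve'' $L(w) \setminus L_B(w)$ contains at least one integer strictly larger than every black colour that could appear on $f$. If such a designation exists, Thomassen's theorem gives a proper $3$-list-colouring of the black subgraph from $L_B$, after which each capital candidate can be coloured from its reserve to create the unique face maximum.

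The hard part will be ensuring this compatibility, since the maximum on a face is a \emph{numerical} comparison but the partition $L(v) = L_B(v) \cup (L(v) \setminus L_B(v))$ is chosen per vertex. A fixed rule like ``take the three smallest entries of $L(v)$'' fails because a small element of one list may dominate a large element of another. A plausible workaround is to perform the assignment adaptively inside the induction of Lemma~\ref{my2}: while recursing on the subgraphs bounded by $2$- and $3$-separating cycles, carry along not just the labels red/blue/black but also a tentative partition of each list, and require the inductive hypothesis to guarantee a dominating reserve on each inner face. The induction would need to be reorganised so that when the reserve of a capital candidate is found to be too small, the vertex can be re-labelled using the slack provided by the other two list-elements, and so that this relabelling process terminates.

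A complementary route is to sharpen the discharging approach of Theorem~\ref{my3}. The reducible configurations in Propositions~\ref{my5}--\ref{my7} exploit that at most six ``forbidden'' colours still leave a choice from a list of size $7$; to close the gap to $5$ one would need reducibility arguments that forbid at most four colours at each key vertex, forcing much stronger structural constraints on extremal counter-examples --- for instance, forbidding all $4$-vertices that share a face with another low-degree vertex, or constraining long chains of $3$-faces meeting at low-degree vertices. The main obstacle in either approach is the tightness of the numerical constraint: unlike ordinary $5$-choosability, a capital colouring couples a proper-colouring constraint with an ordering constraint on each face, and passing from lists of size $7$ to size $5$ leaves essentially no slack in the greedy arguments that drive the discharging proof.
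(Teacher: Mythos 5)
The statement you were given is not a theorem of the paper at all: it appears as the paper's closing conjecture, explicitly left open (the authors only remark that they failed to find a counterexample). Your proposal, to its credit, does not pretend otherwise --- both routes you outline are research plans whose central difficulties you name but do not resolve --- so it cannot be assessed as a proof, and there is no proof in the paper to compare it against.

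That said, your first route contains a concrete error that would sink it even if the ``compatibility condition'' could be arranged. There is no theorem of Thomassen asserting that every triangle-free planar graph is $3$-choosable: Thomassen's $3$-choosability result requires girth at least $5$, and Voigt constructed a triangle-free planar graph that is not $3$-choosable. Since the black subgraph produced by the labelling of Lemma~\ref{my2} is only guaranteed to be triangle-free (it may contain $4$-cycles), the proposed list analogue of the Gr\"otzsch step fails at the outset. Even granting a proper $3$-list-colouring of the black subgraph, the compatibility condition you formulate --- that the two-element reserve of each capital candidate contain an integer exceeding every black colour that could appear on its faces --- is a global numerical constraint that a per-vertex partition of lists of size five cannot in general satisfy: a single vertex may be the capital candidate of many faces whose black sublists are pairwise incomparable, and you offer no mechanism beyond a hoped-for adaptive relabelling, with no argument for its termination or correctness. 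The discharging route is likewise only a sketch: you correctly identify that one would need reducible configurations forbidding at most four colours rather than the six used in Propositions~\ref{my5}--\ref{my7}, but you exhibit no such configurations and no discharging scheme for which they would suffice. The statement therefore remains, as in the paper, an open conjecture.
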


\section{Acknowledgements}

\tab I would like to thank Dan Kr\'{a}l' for supervising me over summer in the Undergraduate Research Support Scheme at Warwick University and the university for funding such projects. I would also like to thank Zden\v{e}k Dvo\v{r}\'{a}k and Ond\v{r}ej Pangr\'{a}c at Charles University for helping me whilst I visited their university as well as the university itself for the support it offered for my trip. Thanks to Tom\'{a}\v{s} Kaiser and the University of West Bohemia for help and support on my trip. I would like to thank Igor Fabrici for his comments on the presentation and the referees for there useful comments and suggestions.

\end{document}